\newtheorem{theorem}{Theorem}[section]
\newtheorem{proposition}[theorem]{Proposition}
\newtheorem{lemma}[theorem]{Lemma}
\newtheorem{corollary}[theorem]{Corollary}
\newtheorem*{claim}{Claim}
\newtheorem{theoremalph}{Theorem}
 \def\NN{{\mathbb N}} 
 \def\RR{{\mathbb R}}
\def \G{\Gamma}
\def \D{\Delta}
\def \cC{\mathcal{C}}
\def \F{\EuScript{F}}
\def \B{\mathcal{B}}
\def\F {{\mathcal F}}
\def\B {{\mathcal B}}
\def\P{\mathcal P}
\def\G{{\mathcal G}}
\begin{document}

\title[Finiteness of physical measures for diffeomorphisms]{Finiteness of physical measures for diffeomorphisms with multi 1-D centers}

\author[Yongluo Cao]{Yongluo Cao}

\address{Department of Mathematics, Soochow University, Suzhou 215006, Jiangsu, China.}
\address{Center for Dynamical Systems and Differential Equation, Soochow University, Suzhou 215006, Jiangsu, China}

\email{\href{mailto:ylcao@suda.edu.cn}{ylcao@suda.edu.cn}}
%\urladdr{\href{http://math.rice.edu/~zg2/}{http://math.rice.edu/$\sim$zg2/}}

\author[Zeya Mi]{Zeya Mi}

\address{School of Mathematics and Statistics,
Nanjing University of Information Science and Technology, Nanjing 210044, Jiangsu, China}
\email{\href{mailto:mizeya@163.com}{mizeya@163.com}}
%\urladdr{\href{http://math.rice.edu/~hk7/}{http://math.rice.edu/$\sim$hk7/}}

\thanks{Y. Cao\ was supported by National Key R\&D Program of China(2022YFA1005802) and NSFC 11790274.}
\thanks{Z. Mi\ was supported by National Key R\&D Program of China(2022YFA1007800) and NSFC 12271260.}

\date{\today}

\keywords{Physical measures, SRB measures, Partially hyperbolic, Lyapunov exponent}
\subjclass[2010]{37C40, 37D25, 37D30}

\begin{abstract}
Let $f$ be a $C^2$ diffeomorphism on compact Riemannian manifold $M$ with partially hyperbolic splitting
$$
TM=E^u\oplus E_1^c\oplus\cdots\oplus E_k^c \oplus E^s,
$$
where $E^u$ is uniformly expanding, $E^s$ is uniformly contracting, and ${\rm dim}E_i^c=1,~ 1\le i \le k, ~k\ge 1$.
We prove the finiteness of ergodic physical(SRB) measures of $f$ under the hyperbolicity of Gibbs $u$-states.
\end{abstract}

\maketitle

%\tableofcontents

\section{Introduction}
%A central topic in smooth dynamical systems is the study of invariant measures which describe the asymptotic distribution of the orbits. The most successful objects in this direction are \emph{physical measures} and \emph{SRB measures}, which were established by Sinai-Ruelle-Bowen in 1970's for uniformly hyperbolic systems\cite{Sin72, Rue76, Bow75, BoR75}. 
Let $f$ be a $C^{2}$ diffeomorphism on a compact Riemannian manifold $M$. A \emph{physical measure} of $f$ is an invariant Borel probability $\mu$ on $M$ for which there exists a positive Lebesgue measure set of points $x\in M$ such that for any continuous $\varphi: M\to \RR$ one has
$$
\lim_{n\to +\infty}\frac{1}{n}\sum_{i=0}^{n-1}\varphi(f^i(x))=\int \varphi d\mu.
$$
The set of points $\B(\mu,f)$ for which the above convergence holds is called the basin of $\mu$ w.r.t. $f$. Physical measures were firstly discovered by
Sinai, Ruelle and Bowen in uniformly hyperbolic systems \cite{Sin72, Rue76, Bow75, BoR75}. 
In smooth ergodic theory, the physical measures are usually produced from \emph{Sinai-Ruelle-Bowen (SRB)} measures.
One says that an invariant measure is an SRB measure if it admits positive Lyapunov exponents almost everywhere, and its conditional measures along Pesin unstable manifolds are absolutely continuous w.r.t Lebesgue measures on these manifolds.
It is well known that every ergodic SRB measure with non-zero Lyapunov exponents is a physical measure. It remains to be a changeable problem in finding physical measures for systems beyond hyperbolicity ( see \cite[Chapter 11]{BDV05} and \cite{pa,v2,y02} for related conjectures ).
 
In this paper, we are interested in studying physical (SRB) measures for partially hyperbolic systems (see $\S$ \ref{dpe} for more details) under the hyperbolicity of \emph{Gibbs $u$-states}. By Gibbs $u$-state, we mean an invariant measure whose disintegration along strong unstable manifolds are absolutely continuous w.r.t. Lebesgue measures on these manifolds. 
The main result of this paper is the following: 

\begin{theoremalph}\label{TheoA}
Let $f$ be a $C^2$ diffeomorphism with a partially hyperbolic splitting 
$
TM=E^u\oplus_{\succ} E_1^c\oplus_{\succ}\cdots\oplus_{\succ} E_k^c \oplus_{\succ} E^s,
$
where ${\rm dim}E_i^c=1,~ 1\le i \le k, ~k\ge 1$.
If every Gibbs $u$-state of $f$ is hyperbolic\footnote{An invariant measure is hyperbolic if all its Lyapunov exponents are non-zero.},
then there are finitely many ergodic physical(SRB) measures of $f$.
\end{theoremalph}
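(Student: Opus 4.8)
The plan is to reduce the whole problem to counting certain ergodic Gibbs $u$-states, and then to exploit the one-dimensionality of the center bundles together with the domination $\oplus_{\succ}$ to force rigid control on the center Lyapunov exponents. First I would set up the standard machinery (as in \cite{BDV05}, Ch.\ 11): the set $\mathrm{Gibb}^u(f)$ of Gibbs $u$-states is nonempty, weak-$*$ compact and convex, its ergodic components again lie in $\mathrm{Gibb}^u(f)$, and every weak-$*$ accumulation point of the Ces\`aro averages $\frac1n\sum_{i=0}^{n-1}f^i_*\mathrm{Leb}_D$ of normalized Lebesgue measure on a local strong-unstable disk $D$ belongs to $\mathrm{Gibb}^u(f)$. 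The first real step is the reduction \emph{physical $\Rightarrow$ Gibbs $u$-state}: for an ergodic SRB measure, the positive exponents always include the uniform ones along $E^u$, so the Pesin unstable manifold is tangent to $E^u\oplus(\text{positive centers})$ and hence contains the strong-unstable manifold as a sub-lamination; restricting absolutely continuous conditionals to a sub-foliation (Fubini on the leaf) shows the conditionals along strong-unstable plaques are absolutely continuous, i.e.\ the measure is a Gibbs $u$-state. Thus it suffices to bound the number of ergodic Gibbs $u$-states that are physical.

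Next I would use the structural hypotheses. For any ergodic measure, the domination $E^c_i\oplus_{\succ}E^c_{i+1}$ between one-dimensional bundles gives $\lambda^c_{i+1}(\mu)<\lambda^c_i(\mu)$ with a definite spectral gap, so the center exponents are strictly ordered; by the standing hyperbolicity assumption none is zero, and therefore every ergodic Gibbs $u$-state $\mu$ carries a well-defined \emph{center index} $s(\mu)=\#\{i:\lambda^c_i(\mu)>0\}\in\{0,\dots,k\}$, with the sign pattern of $(\lambda^c_1,\dots,\lambda^c_k)$ being an initial block of pluses followed by minuses. The case $s(\mu)=0$ is automatically favorable: then all positive exponents are the $E^u$-exponents, so Ruelle's inequality gives $h_\mu(f)\le\int\log|\det Df|_{E^u}|\,d\mu$, while the Gibbs $u$-state property gives the reverse inequality through the unstable-foliation entropy; equality in the entropy formula then forces $\mu$ to be SRB (Ledrappier--Young), and being hyperbolic it is physical. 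This suggests organizing the argument by the index $s(\mu)$, with an induction on $k$ that peels off the leading center bundle $E^c_1$ whenever $\lambda^c_1>0$ by absorbing it into an \emph{extended unstable} $\widehat E=E^u\oplus E^c_1$ and reducing to a system with one fewer center direction.

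For the finiteness itself I would argue as follows. Distinct ergodic physical measures are mutually singular and have disjoint basins $\B(\mu,f)$. By bounded distortion — which is \emph{uniform}, since it comes only from the uniform expansion of $E^u$ and the $C^2$ norm of $f$ — the conditional densities of every Gibbs $u$-state along strong-unstable plaques lie in a fixed interval $[1/K,K]$; combined with absolute continuity of the strong-unstable holonomy, a Hopf-type argument shows each basin is, modulo a Lebesgue-null set, saturated by strong-unstable plaques, occupying full $\mathrm{Leb}^u$-measure of every plaque it meets. Were there infinitely many such ergodic physical measures, I would extract a weak-$*$ limit $\nu\in\mathrm{Gibb}^u(f)$ (compactness) and use that $\nu$ is hyperbolic, together with a uniform lower bound on the inner radius and the Lebesgue mass of the relevant Pesin blocks, to show that the basin of each member of the tail contains a strong-unstable "slab" of uniformly bounded-below size; infinitely many pairwise-disjoint slabs of uniform size cannot coexist in the compact manifold $M$, a contradiction.

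The hard part will be manufacturing that \emph{uniform} lower bound, i.e.\ the effective isolation of ergodic physical measures, precisely for the positive-index ($s\ge1$) measures. Two difficulties compound here: a Gibbs $u$-state is a priori absolutely continuous only along the strong unstable, so one must upgrade it to genuine SRB along the full positive Pesin unstable and do so with estimates uniform along the converging sequence; and weak-$*$ limits of ergodic Gibbs $u$-states are typically non-ergodic, so the individual center exponents can cancel in the limit even though the sign pattern is an initial block. The domination keeps the pattern an initial segment, which prevents the index from behaving wildly, but ruling out a collapse of the leading positive exponent in the limit is exactly the point at which the full strength of the hypothesis ``every Gibbs $u$-state is hyperbolic'' must be used — most naturally through the inductive extended-unstable reduction sketched above, where a non-uniform absolute-continuity argument for $\widehat E$ replaces the uniform one available for $E^u$. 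I expect this uniform hyperbolicity-plus-absolute-continuity control in the positive-index case to be the central obstacle; the index-$0$ analysis and the packing argument are comparatively routine once it is in place.
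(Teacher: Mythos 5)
Your architecture is essentially the paper's: classify ergodic physical measures by the number of positive center exponents, absorb positive centers into an extended unstable bundle, and run an induction using compactness of Gibbs $u$-states plus an absolute-continuity (Hopf-type) argument. You have also correctly located the crux. But the crux is exactly what is missing: you never prove the uniform positivity of the leading center exponent over the ergodic components of a weak-$*$ limit of ergodic Gibbs $cu$-states, and without that the inductive step (upgrading the limit to a Gibbs state for $\widehat E=E^u\oplus E^c_1\oplus\cdots$, which requires \emph{uniformly} positive exponents along $\widehat E$) and your slab-packing argument both fail to close. Declaring this ``the central obstacle'' and expecting a ``non-uniform absolute-continuity argument'' to handle it is a plan, not a proof; moreover your packing step needs Pesin estimates that are uniform along the tail of the sequence, which is precisely the uniformity that non-uniform hyperbolicity refuses to give.

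For the record, the paper closes this gap with two devices. First, a dichotomy (Proposition \ref{sec}): if there are infinitely many ergodic Gibbs $E$-states and $\mu$ is a limit of them which is itself a Gibbs $E$-state, then no ergodic component $\nu$ of $\mu$ can have all exponents along $F$ negative --- otherwise $\nu$ is physical, its basin meets every plaque of a fixed family $\mathcal{S}^u$ of Pesin unstable disks (built from the Pesin blocks of $\nu$ alone) in positive leaf-Lebesgue measure by absolute continuity of the stable lamination, while each approximating ergodic Gibbs $E$-state fills some plaque of that same family with full leaf-Lebesgue measure, forcing it to equal $\nu$. Applying Pesin theory only to the single limit component $\nu$ is how the paper sidesteps the uniformity-over-the-tail problem. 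Second, since ${\rm dim}\,E^c_{i+1}=1$, the map $\nu\mapsto\lambda^c_{i+1}(\nu)=\int\log\|Df|_{E^c_{i+1}}\|\,{\rm d}\nu$ is weak-$*$ continuous and equals the $\nu$-average of the pointwise exponent. So if ergodic components $\nu_n$ of the limit had $\lambda^c_{i+1}(\nu_n)\to 0$, a weak-$*$ limit $\nu$ of the $\nu_n$ would satisfy $\lambda^c_{i+1}(\nu)=0$; but $\nu$ is again a limit of ergodic Gibbs $u$-states, hence by the dichotomy and the hyperbolicity hypothesis $\lambda^c_{i+1}>0$ holds $\nu$-a.e., giving $\lambda^c_{i+1}(\nu)>0$ --- a contradiction that delivers exactly the uniform positivity you need (Theorem \ref{C}). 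A smaller remark: your justification of ``physical $\Rightarrow$ Gibbs $u$-state'' argues from the SRB property, which for positive-index physical measures you have not yet established; the clean route is that Lebesgue-a.e.\ point's empirical limits are Gibbs $u$-states, combined with the entropy inequality for dominated splittings and Ledrappier--Young.
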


According to \cite{CSY15}, partially hyperbolic splitting as in Theorem \ref{TheoA} is abundant among diffeomorphisms away from homoclinic tangencies. Partially hyperbolic diffeomorphisms in Theorem \ref{TheoA} sometimes inherit some properties in uniform hyperbolicity (see e.g. \cite{D12,YZ22}). 

We would like to list some related results below:
\begin{itemize}
\item In \cite[Theorem C]{CMY22}, the authors prove the existence of (ergodic) SRB measures for partially hyperbolic diffeomorphisms with multi 1-D centers.
\item Under the hyperbolicity of Gibbs $u$-state, for case $k=1$, it is shown in \cite{HYY19} that there exist finitely many physical measures whose basins cover a full Lebesgue measure subset. 
\item Without the restriction on dimension of center direction, the existence and finiteness of physical/SRB measures have been achieved for partially hyperbolic diffeomorphisms for which the central Lyapunov exponents of Gibbs $u$-states are all negative, or all positive; including  diffeomorphisms with mostly contracting center \cite{bv}; with mostly expanding center \cite{AV15}; with mostly expanding center and mostly expanding center simultaneously \cite{MCY17}.
\end{itemize}

Observe that every SRB measure must be a Gibbs $u$-state, using the result of \cite{CMY22}, the hyperbolicity on Gibbs $u$-states guarantees the existence of ergodic physical measures for diffeomorphisms in Theorem \ref{TheoA}. Hence, the main effort on Theorem \ref{TheoA} is to show the finiteness of ergodic physical measures. 
Let us remark that in the previous work \cite{bv,AV15,MCY17}, the signs of central Lyapunov exponents w.r.t. Gibbs $u$-states are definite. Our assumption on Gibbs $u$-states is more flexible: different Gibbs $u$-states may admit different signs of Lyapunov exponents on the same central sub-bundle. 

In \cite{HYY19}, besides the finiteness of ergodic physical measures, the authors also show the basin covering property of these physical measures, i.e., Lebesgue almost every point belongs to the basin of some ergodic physical measure. 
It is natural to expect that the basin covering property can hold also for the general case $k>1$.  
The main difference between the special case $k=1$ and the general case $k>1$ is that when $k=1$, the non-uniform behavior occurs only in one fixed center. 

Our strategy of the proof of Theorem \ref{TheoA} is to decompose the ergodic physical/SRB measures into different levels by their signs of Lyapunov exponents on centers (see Theorem \ref{eb}), and then show the finiteness of these measures on each level. To achieve this, we will make an effort on getting the closed property of Gibbs $cu$-states, which is done by showing the uniformity on central Lyapunov exponents for Gibbs $cu$-states on the same level(see Theorem \ref{C}).

\section{Preliminary}

\subsection{Dominated splitting and partially hyperbolicity}\label{dpe}
Let $f$ be a $C^1$ diffeomorphism on compact Riemannian manifold $M$. Given two $Df$-invariant subbundles $E$ and $F$, we say $E$ \emph{dominates} $F$ if 
for every $x\in M$,  $u\in E(x)$ and $v\in F(x)$ with $\|u\|=\|v\|=1$, one has that
$$
\|Df(x)v\|< \frac{1}{2} \|Df(x)u\|.
$$
We write $E\oplus_{\succ}F$ to denote this domination. A $Df$-invariant splitting $TM=E\oplus_{\succ} F$ of the tangent bundle is called a \emph{dominated splitting} if $E$ dominates $F$. 
%Given $a > 0$, define the cone field $\mathcal{C}_a^{E}=\{\mathcal{C}_a^{E}(x)\}_{x\in M}$ associated to $E$ of width $a$ by
%$$
%\mathcal{C}_a^E(x)=\{v=v^E+v^F\in E(x)\oplus F(x): \|v^F\|\le a \|v^E\|\}
%$$
%for every $x\in M$. By symmetry, we can define the cone field $\mathcal{C}_a^F$ associated to $F$.
%We say a smooth embedded sub-manifold $D$ is tangent to $E$ if $T_xD=E(x)$ for every $x\in D$.

One says that $f$ is a \emph{partially hyperbolic} diffeomorphism if there exists a dominated splitting 
$TM=E^u\oplus_{\succ} E_1\oplus_{\succ}\cdots \oplus_{\succ} E_k\oplus_{\succ} E^s$ 
on the tangent bundle such that
\begin{itemize}
\item $E^u$ is uniformly expanding and $E^s$ is uniformly contracting.
\item at least one of $E^u$ and $E^s$ is nontrivial.
\end{itemize}

\subsection{Gibbs $u$-states and Gibbs $E$-states}
Let $f$ be a $C^2$ diffeomorphism with partially hyperbolic splitting $TM=E^u\oplus_{\succ} E^{cs}$. 
Recall that an $f$-invariant measure is a Gibbs $u$-state if its disintegration along strong unstable manifolds is absolutely continuous w.r.t. Lebesgue measures on these manifolds. Gibbs $u$-states were introduced and established by Sinai-Pesin \cite{ps82} for partially hyperbolic systems. 
%Gibbs $u$-states are important candidates for SRB measures and physical measures. 

We list following properties of Gibbs $u$-states, see \cite[Subsection 11.2]{BDV05} for details.

\begin{proposition}\label{uc}
Let $f$ be a diffeomorphism with partially hyperbolic splitting $TM=E^u\oplus_{\succ} F$, one has following properties:
\begin{enumerate}
\item\label{11} The ergodic components of any Gibbs $u$-state are Gibbs $u$-states.
\item\label{22} The set of Gibbs $u$-states is compact in weak$^*$-topology.
\item\label{33} For Lebesgue almost every $x\in M$, any limit measure of 
$$
\big\{\frac{1}{n}\sum_{i=0}^{n-1}\delta_{f^i(x)}\big\}_{n\in \NN}
$$ 
is a Gibbs $u$-state.
\end{enumerate}
\end{proposition}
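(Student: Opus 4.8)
The plan is to reduce all three statements to a single \emph{local} description of Gibbs $u$-states inside a foliation box, together with a uniform control of the conditional densities coming from bounded distortion along $E^u$. Fix a finite cover of $M$ by foliation boxes for the strong unstable foliation $\mathcal{F}^{u}$ tangent to $E^u$; in such a box $B\cong D\times\Sigma$, where $D$ models an unstable plaque and $\Sigma$ a transversal, every invariant measure $\mu$ disintegrates as $\mu|_B=\int_\Sigma \rho_\xi\, m_\xi\, d\hat\mu(\xi)$, with $m_\xi$ the Riemannian measure on the plaque $\xi$ and $\hat\mu$ the transverse quotient measure. By definition $\mu$ is a Gibbs $u$-state exactly when, for every box, these conditionals are absolutely continuous, i.e.\ the densities $\rho_\xi$ exist.

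First I would record the \emph{density lemma}, which is the engine for everything. For $\mu$ a Gibbs $u$-state and $y,z$ on a common unstable plaque the density ratio is given by the universal formula
$$
\frac{\rho_\xi(y)}{\rho_\xi(z)}=\prod_{n\ge 1}\frac{\big|\det\big(Df|_{E^u}\big)(f^{-n}z)\big|}{\big|\det\big(Df|_{E^u}\big)(f^{-n}y)\big|},
$$
the infinite product converging uniformly because backward iterates contract the plaque exponentially and $x\mapsto\det(Df|_{E^u})(x)$ is H\"older. Consequently, after normalisation, the densities are bounded above and below and are equi-H\"older, by constants depending only on $f$, not on $\mu$ nor on the plaque. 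For the compactness statement (2), take Gibbs $u$-states $\mu_n\to\mu$ weak$^*$ and fix a box. I would pass to the limit in the disintegration: the densities $\rho^n_\xi$ are uniformly bounded and equi-H\"older, so by Arzel\`a--Ascoli (applied after transporting all plaques to the model $D$ through the continuous foliation charts) one extracts a continuous limit density, while simultaneously the transverse measures $\hat\mu_n$ converge. The point requiring care is that weak$^*$ convergence of the $\mu_n$ does not by itself control conditional measures; here the uniform density bounds and the continuity of the foliation do, and one must check that integrating the limit density against the limit transverse measure reproduces $\mu|_B$. This identification is the main obstacle of the whole proposition.

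For (3) I would first prove the construction principle: for any unstable disk $D$, every weak$^*$ accumulation point of $\frac1n\sum_{i=0}^{n-1}f^i_\ast m_D$ is a Gibbs $u$-state. Indeed $f^i_\ast m_D$ is an absolutely continuous measure on the disk $f^i(D)$, the disks $f^i(D)$ converge on compact pieces to unions of genuine unstable plaques, and their densities are controlled exactly as in the density lemma; hence the Ces\`aro averages have absolutely continuous conditionals in the limit, and the limit is a Gibbs $u$-state by (2). To pass from disks to Lebesgue-almost every point of $M$, I would invoke absolute continuity of the strong unstable foliation: writing Lebesgue measure in a box as an integral of the plaque measures $m_\xi$ over the transversal, it suffices to establish the almost-everywhere statement along $m_\xi$ for almost every plaque, and a Fubini argument then upgrades it to Lebesgue-almost every $x\in M$.

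Finally, for (1) I would disintegrate a Gibbs $u$-state into its ergodic components, $\mu=\int \mu_\alpha\, d\hat\mu(\alpha)$, and compare this with the disintegration along a measurable partition subordinate to $\mathcal{F}^{u}$. The key observation is that the conditionals of $\mu_\alpha$ along unstable plaques are, up to normalisation, the restrictions of the conditionals of $\mu$ to the corresponding ergodic component, and these remain absolutely continuous since restricting an absolutely continuous measure to any measurable set preserves absolute continuity with respect to $m_\xi$. The only delicate point is justifying this restriction formula through an iterated-disintegration (Fubini) argument that reconciles the ergodic partition with the unstable partition; once this is in place, $\hat\mu$-almost every ergodic component is a Gibbs $u$-state.
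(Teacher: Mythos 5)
The paper does not prove this proposition at all: it is quoted as a known result with a pointer to \cite[Subsection 11.2]{BDV05}, so the only thing to assess is whether your sketch would reconstruct the standard argument. Your density lemma, your treatment of (1) via iterated disintegration (restricting an absolutely continuous conditional to a measurable set indeed keeps it absolutely continuous), and your plan for (2) all follow the classical Pesin--Sinai/Bonatti--Diaz--Viana route. For (2) you correctly identify the real difficulty (weak$^*$ convergence does not control disintegrations) but leave it unresolved; the standard fix is to recast the uniform density bounds as a two-sided inequality $K^{-1}\nu_n\le\mu_n|_B\le K\nu_n$ against the measures $\nu_n=\int_\Sigma (m_\xi/m_\xi(\xi))\,d\hat\mu_n$, pass to the limit in that inequality, and read off absolute continuity of the limit conditionals from the resulting domination. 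That is a finishable gap, flagged honestly.

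The genuine gap is in (3). Your construction principle --- every accumulation point of $\frac1n\sum_{i=0}^{n-1}f^i_\ast m_D$ is a Gibbs $u$-state --- is a statement about the measure averaged over the whole disk, and it does not imply the pointwise statement that for $m_D$-almost every $x$ the accumulation points of $\frac1n\sum_{i=0}^{n-1}\delta_{f^i(x)}$ are Gibbs $u$-states; the Fubini/absolute-continuity step only reduces the problem to a single plaque, it does not bridge from the averaged statement to the almost-everywhere one. The missing argument is a separation argument: fix a continuous $\varphi$ and suppose the set $A\subset D$ of points with $\limsup_n\frac1n\sum\varphi(f^ix)>\sup_{\mu\in\mathcal{G}}\int\varphi\,d\mu+\epsilon$ has $m_D(A)>0$ (here $\mathcal{G}$ is the set of Gibbs $u$-states); then one averages the pushforwards of $m_D|_A$ (the construction principle must therefore be proved for restrictions of $m_D$ to arbitrary positive-measure subsets, which your density estimates do allow), extracts a limit that is a Gibbs $u$-state by (2) but integrates $\varphi$ above the supremum --- a contradiction. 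Running this over a countable dense family of $\varphi$ and using compactness and convexity of $\mathcal{G}$ yields (3). Without this step your proof of (3) establishes a strictly weaker statement than the one claimed.
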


Recall the following entropy formula for diffeomorphisms with dominated splittings, see \cite[Theorem F]{CYZ18}, \cite[Theorem A]{cce15} for the proof.\begin{proposition}\label{cug}
Let $f$ be a diffeomorphism with dominated splitting $TM=E\oplus_{\succ} F$. Then for Lebesgue almost every $x\in M$, any limit measure $\mu$ of $\{\frac{1}{n}\sum_{i=0}^{n-1}\delta_{f^i(x)}\}_{n\in \NN}$ satisfies 
$$
h_{\mu}(f)\ge \int \log |{\rm det}Df|_{E}| {\rm d} \mu.
$$
\end{proposition}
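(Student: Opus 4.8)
The plan is to turn the exponential growth rate of $\dim E$-dimensional volume along a Lebesgue-generic orbit into metric entropy of the empirical limit, with the domination $E\oplus_{\succ}F$ guaranteeing that this growth is fully registered by the entropy. Write $\sigma_n(x)=\frac{1}{n}\sum_{i=0}^{n-1}\delta_{f^i(x)}$ and suppose $\mu$ is a weak-$*$ limit $\mu=\lim_{j}\sigma_{n_j}(x)$ along a subsequence. Since $\log|\det Df|_E|$ is continuous, weak-$*$ convergence gives $\int \log|\det Df|_E|\,\mathrm{d}\mu=\lim_{j}\frac{1}{n_j}\sum_{i=0}^{n_j-1}\log|\det Df|_E|(f^i(x))$, so it suffices to bound $h_\mu(f)$ from below by this Birkhoff limit, which I call $\chi$.

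The geometric input comes from disks tangent to $E$. Using the domination together with the $C^2$ hypothesis, I would fix a forward-invariant cone field around $E$ and, through $x$, an embedded $\dim E$-disk $D$ whose tangent spaces lie in this cone. Invariance of the cone keeps each $f^n(D)$ tangent to it, and bounded distortion (a consequence of $C^2$ regularity and domination, via a standard graph-transform estimate along the plaque family tangent to $E$) shows that the induced Riemannian volume of $f^n(D)$ is comparable to $\exp\big(\sum_{i=0}^{n-1}\log|\det Df|_E|(f^i(x))\big)$; along the subsequence this is of order $\exp(n_j\chi+o(n_j))$.

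To pass from volume to entropy, I would fix $\epsilon>0$ and cover a neighbourhood of the relevant orbit segment by dynamical $(n_j,\epsilon)$-balls. Each such Bowen ball meets $f^{n_j}(D)$ in a set of induced diameter at most of order $\epsilon$, hence of induced volume at most $C\epsilon^{\dim E}$; therefore covering $f^{n_j}(D)$ requires at least of order $\exp(n_j\chi)\big/(C\epsilon^{\dim E})$ Bowen balls. Arranging the corresponding separated points to carry definite $\mu$-mass and invoking Katok's measure-theoretic entropy formula, one obtains $h_\mu(f)\ge \chi$ after letting $\epsilon\to 0$.

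The \emph{main obstacle} is twofold. First, $E$ is in general non-integrable, so the disks tangent to $E$ form only a plaque family rather than an invariant foliation; keeping $f^n(D)$ inside the cone and controlling its distortion is precisely where domination and $C^2$ regularity must be used with care. Second, and more seriously, $\mu$ may be singular, so the covering count cannot be read off Lebesgue measure directly and must instead be tied to $\mu$ itself. The bridge is a Fubini-type genericity statement: for Lebesgue-a.e. $x$, almost every point of a transverse $E$-disk through $x$ shares the same empirical limit, so that the positive-volume disk $D$ really does produce $(n_j,\epsilon)$-separated points with $\mu$-mass bounded below. Equivalently, the empirical limit inherits the absolute continuity of Lebesgue along $E$-plaques---the \emph{Gibbs $E$-state} property---and establishing this cleanly, together with the attendant zero-boundary-measure choices in the disintegration, is the crux of the argument; once it is in place the entropy bound follows as above.
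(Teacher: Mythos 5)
The paper does not prove this proposition; it is quoted verbatim from \cite[Theorem F]{CYZ18} and \cite[Theorem A]{cce15}, so the only fair comparison is with those arguments, and your route is genuinely different from theirs --- and, as written, it has a gap that I do not think can be closed. The step you yourself flag as the crux, namely that ``the empirical limit inherits the absolute continuity of Lebesgue along $E$-plaques (the Gibbs $E$-state property)'', is false in the generality of the statement. The proposition is asserted for an arbitrary dominated splitting $E\oplus_{\succ}F$, and in this paper it is applied with $E=F_i=E^u\oplus E_1^c\oplus\cdots\oplus E_i^c$, which is \emph{not} uniformly expanding. The empirical limit $\mu$ of a Lebesgue-generic point can perfectly well have negative (or zero) Lyapunov exponents inside $E$ --- e.g.\ a Dirac mass at a sink when $E$ contains a contracted center, or more relevantly a Gibbs $u$-state whose exponent along some $E^c_j\subset E$ is negative --- and then $\mu$ is not absolutely continuous along $E$-plaques, yet the inequality must still hold (and is not trivial when $\int\log|\det Df|_E|\,d\mu>0$ thanks to the $E^u$ part). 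Since your covering count lives on the disk $D$ and is measured by the induced Lebesgue measure there, while Katok's formula needs a lower bound on the number of Bowen balls required to cover a set of definite $\mu$-measure, without that bridge the volume-growth estimate never reaches $h_\mu(f)$. A secondary problem is the distortion claim: for a disk $D$ merely tangent to a cone around $E$, the forward orbits of distinct points of $D$ separate, their Birkhoff sums of $\log|\det Df|_E|$ need not be comparable to that of $x$, and the usual bounded-distortion mechanism (backward contraction inside an unstable manifold) is unavailable; so $\mathrm{vol}(f^n(D))\asymp\exp\bigl(\sum_{i<n}\log|\det Df|_E|(f^ix)\bigr)$ is unjustified.

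The cited proofs avoid absolute continuity altogether. Roughly, they use the dominated splitting (a purely $C^1$ argument) to bound from above the Lebesgue measure of dynamical balls, $\mathrm{Leb}\bigl(B_n(x,\delta)\bigr)\le K\exp\bigl(n\varepsilon-\sum_{i=0}^{n-1}\log|\det Df|_E|(f^ix)\bigr)$, and then run a Misiurewicz/Katok-type counting argument with partitions of small boundary: a set of positive Lebesgue measure around the orbit of a Lebesgue-generic $x$ needs exponentially many elements of the refined partition to cover it, and this count passes to a lower bound for $h_\mu(f)$ for every weak-$*$ limit $\mu$ of the empirical measures. The measure used for counting is the ambient Lebesgue measure (which is where ``Lebesgue almost every $x$'' enters), not $\mu$ and not the induced volume of an $E$-disk; this is exactly what sidesteps the obstruction above. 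If you want to salvage your approach you would have to restrict to the case where $E$ is uniformly expanding (where the Gibbs $u$-state property of empirical limits, item (3) of Proposition \ref{uc}, is available), but that special case is not sufficient for the applications in this paper.
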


Recall another notion called \emph{Gibbs $cu$-states}.
Let $f$ be a diffeomorphism with dominated splitting $TM=E\oplus_{\succ} F$. An invariant measure $\mu$ called a Gibbs $cu$-state associated to $E$ if
\begin{itemize}
\item the Lyapunov exponents of $\mu$ along $E$ are positive, 
\item the conditional measure of $\mu$ along Pesin unstable manifolds tangent to $E$ are absolutely continuous w.r.t. Lebesgue measure on these manifolds.
\end{itemize}

Since we need to deal with various Gibbs $cu$-states for partially hyperbolic diffeomorphisms with several sub-bundles, for simplicity we will just call $\mu$ a Gibbs $E$-state if it is a Gibbs $cu$-state associated to $E$.
%then there exists a Pesin unstable manifold $W^{E,u}(x)$ tangent to $E$ at $x$ with dimension ${\rm dim}E$, for $\mu$-almost every $x$. A measurable partition $\xi$ is said to be
%subordinate to $W^{E,u}$ if for $\mu$-almost every $x$,
%\begin{itemize}
%\item $\xi(x) \subset W^{E,u}(x)$, where $\xi$ is the element of $\xi$ containing $x$,
%\item $\xi(x)$ contains an open neighborhood of $x$ in $W^{E,u}(x)$.
%\end{itemize}
%By \cite[Lemma 3.1]{LS82}, there exists measurable partition subordinate to $W^{E,u}$. 
%In particular, for  Thus, measurable partitions subordinate to W E;u can be defined
%similarly.
%Now we recall the definition of SRB measures, following [37, 16]:
%Definition 2.2. Let f be a C2 diffeomorphism on M, an f-invariant Borel probability ? is an SRB measure if ? has positive Lyapunov exponents almost everywhere,
%and it has absolutely continuous conditional measures along any measurable partitions subordinate to W u.
%\begin{definition}
%Assume that $f$ is a $C^2$ diffeomorphism with a dominated splitting $TM = E\oplus_{\succ} F$. An $f$-invariant Borel probability $\mu$ is called a Gibbs $E$-state if the Lyapunov exponents of $\mu$ along $E$ are positive and $\mu$ has absolutely continuous conditional measures along any measurable partition subordinate to $W^{E,u}(x)$.
%\end{definition}
Note that when $E$ is uniformly expanding, then Gibbs $E$-state is indeed the Gibbs $u$-state. 

Similar to property (\ref{11}) of Proposition \ref{uc}, we have the following result. See a proof in \cite[Lemma 2.4]{cv} or \cite[Proposition 4.7]{CMY22}.
\begin{proposition}\label{cue}
Assume that $f$ is a $C^2$ diffeomorphism with a dominated splitting $TM = E\oplus_{\succ} F$. If $\mu$ is a Gibbs $E$-state, then any ergodic component of $\mu$ is also a Gibbs $E$-state.
\end{proposition}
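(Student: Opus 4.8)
The plan is to verify the two defining conditions of a Gibbs $E$-state for $\tau$-almost every ergodic component $\mu_\tau$ in the ergodic decomposition $\mu=\int \mu_\tau\,{\rm d}\tau$: that the Lyapunov exponents along $E$ are positive, and that the conditional measures along the Pesin unstable manifolds tangent to $E$ are absolutely continuous with respect to Lebesgue. The first condition is the routine half. The Lyapunov exponents along $E$ are $f$-invariant measurable functions, hence constant along each ergodic component; since they are positive $\mu$-almost everywhere by hypothesis, they remain (constant and) positive for $\tau$-almost every $\mu_\tau$.

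The substantive point is the absolute continuity of the conditionals. Here I would realize the ergodic decomposition as the Rokhlin disintegration of $\mu$ over the $\sigma$-algebra $\mathcal I$ of $f$-invariant sets, whose associated partition into ergodic components I denote by $\mathcal E$. Fixing a Pesin block on which the unstable manifolds tangent to $E$ have uniform size, I would choose a measurable partition $\xi$ subordinate to this unstable foliation, so that by hypothesis the conditional measures $\mu^\xi_x$ of $\mu$ are absolutely continuous on the plaques. The key geometric observation is that, modulo a $\mu$-null set, each plaque of $\xi$ is contained in a single ergodic component. Indeed, any two points $x,y$ on the same unstable manifold satisfy $d(f^{-n}x,f^{-n}y)\to 0$, so their backward Birkhoff averages agree for every continuous test function; since for an invariant measure the backward and forward Birkhoff averages coincide almost everywhere and together determine the ergodic component, any two $\mu$-typical points on the same plaque lie in the same component. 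The absolute continuity of $\mu^\xi_x$ then lets me invoke a Fubini argument to ensure that Lebesgue-almost every point of $\mu$-almost every plaque is typical, whence each such plaque meets only one ergodic component; that is, $\xi$ refines $\mathcal E$ modulo $0$.

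Finally I would conclude by the transitivity of conditional measures. Since $\xi$ is finer than $\mathcal E$, Rokhlin's theorem yields $(\mu_\tau)^\xi_x=\mu^\xi_x$ for $\mu$-almost every $x$, where $\mu_\tau$ is the component through $x$; thus the conditionals of $\mu_\tau$ along the plaques inherit the absolute continuity of those of $\mu$. Running this over a countable family of Pesin blocks exhausting a full-measure set, one obtains that $\mu_\tau$ is a Gibbs $E$-state for $\tau$-almost every $\tau$.

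The main obstacle I anticipate is the geometric lemma together with its measure-theoretic bookkeeping: one must arrange $\xi$ to be a genuine measurable partition subordinate to the non-uniform (Pesin) unstable foliation, reconcile the various full-measure sets of typical points, and verify carefully that the containment of plaques in ergodic components holds modulo zero, so that the transitivity of the disintegration applies cleanly. The positivity of the central exponents along $E$ is precisely what makes these Pesin unstable manifolds available in the first place, and it is their existence—rather than any uniform hyperbolicity—that the whole argument rests on.
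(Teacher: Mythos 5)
The paper does not prove this proposition itself; it only cites \cite[Lemma 2.4]{cv} and \cite[Proposition 4.7]{CMY22}, and your argument is essentially the standard one used there: positivity of the exponents along $E$ passes to ergodic components because Lyapunov exponents are invariant functions, and absolute continuity passes because a measurable partition subordinate to the Pesin unstable lamination refines the partition into ergodic components mod $0$ (points on one unstable plaque have the same backward Birkhoff averages, which for typical points determine the ergodic component), so transitivity of Rokhlin disintegrations gives $(\mu_\tau)^\xi_x=\mu^\xi_x$. One small imprecision: you do not get, and do not need, that \emph{Lebesgue}-almost every point of a plaque is typical --- absolute continuity $\mu^\xi_x\ll{\rm Leb}$ goes the wrong way for that; what you need and what disintegration gives directly is that $\mu^\xi_x$-almost every point of the plaque is typical, which already forces $\mu^\xi_x$ to be carried by a single ergodic component and hence $\xi\succ\mathcal{E}$ mod $0$. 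With that correction the argument is complete and matches the cited proofs.
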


We will use the following fact several times, which can be deduced by the absolute continuity of Pesin stable lamination, see \cite{y02} for a proof.
\begin{proposition}\label{pes}
Assume that $f$ is a $C^2$ diffeomorphism with a dominated splitting $TM = E\oplus_{\succ} F$. If $\mu$ is an ergodic Gibbs $E$-state whose Lyapunov exponents along $F$ are all negative, then $\mu$ is a physical measure. 
\end{proposition}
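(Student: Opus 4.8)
The plan is to show that the basin $\B(\mu,f)$ has positive Lebesgue measure by combining the two absolute-continuity properties at our disposal: the absolute continuity of the unstable conditionals, which is built into the definition of a Gibbs $E$-state, and the absolute continuity of the Pesin stable lamination. First I would fix the hyperbolic structure. Because $\mu$ is a Gibbs $E$-state its Lyapunov exponents along $E$ are all positive, while by hypothesis those along $F$ are all negative, so $\mu$ is a hyperbolic measure whose Pesin unstable manifolds are tangent to $E$ and whose Pesin stable manifolds are tangent to $F$. Choosing a countable dense family $\{\varphi_j\}_{j\in\NN}$ in $C(M,\RR)$, let $G$ be the set of $x$ with $\frac1n\sum_{i=0}^{n-1}\varphi_j(f^i(x))\to\int\varphi_j\,{\rm d}\mu$ for all $j$; by Birkhoff's theorem and ergodicity of $\mu$ one has $\mu(G)=1$, and every point of $G$ belongs to $\B(\mu,f)$.

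Next I would localize on a Pesin block $\Lambda$ with $\mu(\Lambda)>0$ on which local stable and unstable manifolds have uniform size and depend continuously on the base point, so that $\Lambda$ carries a local product structure. Disintegrating $\mu|_{\Lambda}$ along its unstable plaques, the defining property of a Gibbs $E$-state makes each conditional equivalent to Lebesgue measure on its plaque; since $\mu(G\cap\Lambda)=\mu(\Lambda)$, the conditional of $G\cap\Lambda$ is full on $\mu$-almost every plaque. I then fix one such plaque $D^u$, so that $D^u\cap G\cap\Lambda$ has positive Lebesgue measure inside $D^u$.

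Finally I would saturate by stable manifolds. If $y\in W^s_{\mathrm{loc}}(x)$ for some $x\in D^u\cap G\cap\Lambda$, then $d(f^n(y),f^n(x))\to0$, so $y$ and $x$ share the same forward Birkhoff limits and hence $y\in\B(\mu,f)$; therefore the stable saturate $W=\bigcup_{x\in D^u\cap G\cap\Lambda}W^s_{\mathrm{loc}}(x)$ lies in $\B(\mu,f)$. Viewing $D^u$ as a transversal to the uniformly sized stable lamination over $\Lambda$, the absolute continuity of this lamination (cf.\ \cite{y02}) transports the positive Lebesgue measure of $D^u\cap G\cap\Lambda$ to positive ambient Lebesgue measure of $W$. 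Thus ${\rm Leb}(\B(\mu,f))>0$, and $\mu$ is a physical measure.

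The ergodic-theorem construction of $G$ and the Fubini step on the unstable disintegration are routine; the step carrying the real weight is the last one, where positivity of Lebesgue measure on a single unstable transversal must be pushed forward to positive ambient measure. The only delicacy is that Pesin stable manifolds vary merely measurably and have non-uniform size, which is exactly why the argument must be confined to the block $\Lambda$ where the stable holonomy is controlled. The cited absolute-continuity theorem supplies precisely this control, so no essentially new difficulty arises.
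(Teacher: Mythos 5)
Your proof is correct and follows exactly the route the paper indicates: the paper does not write out a proof but attributes the statement to the absolute continuity of the Pesin stable lamination (citing \cite{y02}), and your argument --- full conditional measure of the basin on an unstable plaque via the Gibbs $E$-state property, then stable saturation over a Pesin block via absolutely continuous holonomy --- is that standard argument, and is also the same mechanism the paper itself uses in the Claim inside its Proposition on limits of ergodic Gibbs $E$-states. The only cosmetic point is that the unstable conditionals are by definition absolutely continuous rather than equivalent to Lebesgue, but since they are probability measures their densities are positive on a set of positive Lebesgue measure, which is all your Fubini step needs.
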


%To study the limit measures of ergodic Gibbs $E$-states, for simplicity we give the next definition.
%\begin{definition}
%Given a $C^2$ diffeomorphism $f$ with dominated splitting $TM=E\oplus_{\succ} F$, we say $\mu$ is an ergodic limit of Gibbs $E$-states if there exists a sequence of different ergodic Gibbs $E$-states $\{\mu_n\}_{n\in \NN}$ such that $\mu_n\to \mu$ as $n\to +\infty$ in weak$^*$-topology. 
%\end{definition}

The result below is abstract from \cite[Theorem 4.10]{CMY22}.
\begin{proposition}\label{ugu}
Let $f$ be a diffeomorphism with dominated splitting $TM=E\oplus_{\succ} F$. If $\mu$ is a limit measure of the set of ergodic Gibbs $E$-states, whose Lyapunov exponents along $E$ are uniformly positive, then $\mu$ is a Gibbs $E$-state.
\end{proposition}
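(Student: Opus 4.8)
The plan is to represent each ergodic Gibbs $E$-state by the disintegration of its conditional measures along local unstable disks tangent to $E$, to use the uniform positivity of the $E$-exponents to confine these disks and their densities to a single compact family, and then to pass the whole structure to the weak$^*$ limit. When $E=E^u$ this is merely the compactness of Gibbs $u$-states (Proposition \ref{uc}(\ref{22})) and follows from continuity of the strong unstable foliation; the essential difficulty here is that $E$ is only \emph{non-uniformly} expanded, so the relevant unstable lamination is just measurable and a uniform scale must be manufactured by hand from the hypothesis.

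First I would write $\mu_n\to\mu$ with each $\mu_n$ an ergodic Gibbs $E$-state whose Lyapunov exponents along $E$ are all bounded below by a common $\lambda>0$. Fix a continuous cone field $\C^E$ around $E$ that is forward-invariant on the scale provided by the domination $E\oplus_\succ F$. For each $\mu_n$, Pesin theory together with the uniform lower bound $\lambda$ yields, through Pliss's lemma and the associated $(\lambda/2)$-hyperbolic times, a positive $\mu_n$-density set of points carrying local unstable manifolds tangent to $\C^E$ of a \emph{uniform} radius $r_0>0$, with \emph{uniform} curvature bounds and, by the $C^2$ bounded-distortion estimate along the expanding direction, conditional densities with respect to the induced Lebesgue measure lying in a fixed interval $[\kappa^{-1},\kappa]$, where $r_0$ and $\kappa$ depend only on $f$ and $\lambda$ and not on $n$. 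Disintegrating $\mu_n$ over this family gives a representation $\mu_n=\int \nu^n_D\,\mathrm{d}\hat\mu_n(D)$, with $\hat\mu_n$ a probability on the space $\mathscr D$ of $C^1$ disks of radius $r_0$ tangent to $\C^E$ with bounded curvature, and each $\nu^n_D$ supported on $D$ with density in $[\kappa^{-1},\kappa]$.

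The limit step then uses that $\mathscr D$ is compact in the $C^1$ topology and that the set of bounded-density disk measures is weak$^*$ compact: along a subsequence $\hat\mu_n\to\hat\mu$ and the disintegration passes to the limit, giving $\mu=\int \nu_D\,\mathrm{d}\hat\mu(D)$ with each $\nu_D$ supported on a limit disk $D\in\mathscr D$ and absolutely continuous with density still in $[\kappa^{-1},\kappa]$. It then remains to check that these limit disks are genuinely subordinate to the Pesin unstable lamination of $\mu$ tangent to $E$ and that the $\nu_D$ are the corresponding conditional measures, so that $\mu$ has absolutely continuous conditionals along Pesin $E$-unstable manifolds.

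I expect the main obstacle to be the positivity of the exponents of $\mu$ along $E$: weak$^*$ convergence gives only semicontinuity of exponents, so positivity is not automatic, and this is exactly where the uniform hypothesis must be invoked rather than naive limits. Being an accumulation of $(\lambda/2)$-hyperbolic-time disks that are backward-contracted by $Df^{-1}$ in the conorm at a positive density of iterates is a \emph{closed} condition, since it is expressed through integrals of the continuous functions $\log\mathfrak m(Df|_E)$ over the measures restricted to hyperbolic blocks of uniform size; passing this to the limit keeps each disk inside a Pesin unstable manifold with exponents $\ge\lambda/2>0$, so $\mu$ is hyperbolic along $E$ with the correct sign. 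The residual technical point is reconciling the geometric (uniform-disk) disintegration with the measure-theoretic definition via the Pesin lamination, and in particular ruling out loss of mass to degenerate short disks in the limit; this is controlled precisely by the uniform radius $r_0$, which the hypothesis of uniformly positive exponents guarantees.
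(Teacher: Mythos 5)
The paper offers no proof of Proposition \ref{ugu}: it is imported directly from \cite[Theorem 4.10]{CMY22}, so there is no internal argument to compare yours against. Your sketch is the standard route followed in that reference (and in the earlier work of V\'asquez \cite{cv} and Andersson--V\'asquez \cite{AV15}): use the uniform lower bound on the $E$-exponents to manufacture, via Pliss's lemma and hyperbolic times, unstable disks of a uniform radius $r_0$ with uniformly bounded distortion, observe that the family of disk-supported measures with densities in $[\kappa^{-1},\kappa]$ over a $C^1$-compact family of disks is weak$^*$ closed, and pass the disintegration to the limit. So in spirit your proposal is the right one.

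Two steps need more care than you give them. First, the uniform positivity of the Lyapunov exponents along $E$ does not bound $\int\log\mathfrak{m}(Df|_{E})\,d\mu_n$ from below (the sequence $\log\mathfrak{m}(Df^{n}|_{E})$ is only superadditive), so Pliss must be applied to $Df^{N}$ for a suitably large uniform $N$; this is routine but cannot be skipped. Second, and more substantively, the hyperbolic-time construction only equips a set of uniformly positive $\mu_n$-measure (some $\theta>0$ independent of $n$) with disks of size $r_0$; it does not yield the full disintegration $\mu_n=\int\nu^{n}_{D}\,d\hat\mu_n(D)$ over uniform-size disks that your argument displays. Passing to the limit therefore gives only $\mu\ge\theta\nu$ with $\nu$ carried by uniform unstable disks having absolutely continuous conditionals, and one must then use the $f$-invariance of the absolute-continuity property along the unstable lamination (saturating the uniform disks under iteration, with smooth Jacobians) to upgrade this to the statement that \emph{all} of $\mu$ has absolutely continuous conditionals on Pesin unstable manifolds tangent to $E$. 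This saturation step is precisely the extra work carried out in \cite{CMY22}; without it your argument only produces a Gibbs $E$-state component of $\mu$, not the conclusion that $\mu$ itself is a Gibbs $E$-state. Your treatment of the remaining point --- that the limit disks are genuine unstable disks with exponents bounded below by $\lambda/2$, because uniform backward contraction is a closed condition --- is correct.
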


\section{Closed property on Gibbs $cu$-states}

For the rest, we will concentrate on partially hyperbolic diffeomorphisms as in Theorem \ref{TheoA}. For convenience, given $k\ge 1$, denote by $\P_k(M)$ the collection of $C^2$ diffeomorphisms in $M$ with partially hyperbolic splitting 
$
TM=E^u\oplus_{\succ} E_1^c\oplus_{\succ}\cdots\oplus_{\succ} E_k^c \oplus_{\succ} E^s
$
such that 
\begin{itemize}
\item ${\rm dim}E_i=1, 1\le i \le k$;
\smallskip
\item every Gibbs $u$-state of $f$ is hyperbolic.
\end{itemize}

We write
$$
F_i=E^u\oplus_{\succ} \cdots \oplus_{\succ} E_i^c, \quad 0\le i \le k.
$$
%For any $f$-invariant measure $\mu$, by Oseledec's theorem, for every $1\le i \le k$, one can define
%$$
%\lambda^c_i(x)=\lim_{n\to +\infty}\frac{1}{n}\log \|Df^n|_{E^c_i(x)}\|, \quad \mu \textrm{-a.e.}~x.
%$$
For every $0\le i \le k$, let $G_{i}$ be the set of Gibbs $F_i$-states of $f$; let $G_i^{erg}$ be the set of ergodic Gibbs $F_i$-states of $f$.
For any $f$-invariant measure $\mu$, by Oseledec's theorem \cite{bp02}, for every $1\le i \le k$, one can define
$$
\lambda^c_i(x)=\lim_{n\to +\infty}\frac{1}{n}\log \|Df^n|_{E^c_i(x)}\|, \quad \mu \textrm{-a.e.}~x.
$$
Moreover, define
$$
\lambda^c_i(\mu)=\int \log\|Df|_{E^c_i}\|{\rm d} \mu.
$$
%That is, we have $\mu\in \mathcal{L}_i$ iff there exists a sequence of different ergodic Gibbs $F_i$-states $\{\mu_n\}_{n\in \NN}$ such that 
%$$
%\mu_n\xrightarrow{weak^*} \mu, \quad \textrm{as}~n\to +\infty.
%$$

%
%\paragraph{\bf{Notation.}}
%Let $f$ and $F_i$, $1\le i \le k$ be as in Theorem \ref{C}. For any $f$-invariant measure $\mu$, we define
%\begin{itemize}
%\item 
%$
%\lambda^c_i(x)=\lim_{n\to +\infty}\frac{1}{n}\log \|Df^n|_{E^c_i(x)}\|
%$
%when the limit exists.
%\medskip
%\item 
%$
%\lambda^c_i(\mu)=\int \log\|Df|_{E^c_i}\|{\rm d} \mu.
%$
%\medskip
%\item $G_{i}$ as the set of Gibbs $F_i$-states of $f$.
%\medskip
%\item $\mathcal{L}_i$ as the set of  ergodic limits of Gibbs $F_i$-states.
%\medskip
%\item $\cC(\mu)$ as the set of ergodic components of $\mu$.
%\end{itemize}
The main result of this section is as follows:
\begin{theorem}\label{34}
If $f\in \P_k(M)$, then for every $0\le i \le k-1$, if $\mu$ is a limit measure of $G_i^{erg}$, then
\begin{itemize}
\item $\mu$ is a Gibbs $F_i$-state,
\item and one has that
$$
\lambda^c_{i+1}(x)> 0, \quad \mu \textrm{-a.e.}~x.
$$
\end{itemize} 
Moreover, there are at most finitely many ergodic Gibbs $F_k$-states.
\end{theorem}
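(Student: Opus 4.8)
The plan is to run an induction on the level $i=0,1,\dots,k$, proving at each level two facts: a \emph{closedness} property (every accumulation point of $G_i^{erg}$ lies in $G_i$) and a \emph{positivity} property ($\lambda^c_{i+1}>0$ a.e.\ for such accumulation points). Throughout I read ``a limit measure of $G_i^{erg}$'' as a weak-$*$ limit of pairwise distinct elements of $G_i^{erg}$, which is the only case carrying content. Two reductions guide everything. First, every Gibbs $F_i$-state is a Gibbs $u$-state (absolute continuity along the $F_i$-unstable leaves restricts to the strong-unstable subleaves), so by the standing hypothesis $f\in\P_k(M)$ it is hyperbolic; in particular $\lambda^c_{i+1}(x)\neq 0$ a.e., and the positivity property only has to exclude $\lambda^c_{i+1}<0$. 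Second, closedness is exactly the hypothesis of Proposition~\ref{ugu} applied to $E=F_i$, \emph{provided} the exponents along $F_i$ are uniformly positive over $G_i^{erg}$; so the crux is a uniform lower bound on central exponents.

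For this uniform bound at level $i\ge 1$ I would split $G_i^{erg}$ by the sign of $\lambda^c_{i+1}$ (for $i=0$ the bound is automatic, since $F_0=E^u$ is uniformly expanding). If $\lambda^c_{i+1}(\nu)>0$, the domination $E_i^c\oplus_\succ E_{i+1}^c$ gives the pointwise gap $\log\|Df|_{E^c_{i+1}}\|<\log\|Df|_{E^c_i}\|-\log 2$, whence $\lambda^c_i(\nu)>\lambda^c_{i+1}(\nu)+\log 2>\log 2$; on this part the bound is free. The remaining \emph{terminal} states have $\lambda^c_{i+1}<0$, so by the ordering of exponents all exponents outside $F_i$ are negative and $\nu$ is a physical (SRB) measure by Proposition~\ref{pes}. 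To bound $\lambda^c_i$ below on the terminal set I argue by contradiction: a sequence of (necessarily pairwise distinct) terminal states with $\lambda^c_i\to 0$ would, by compactness of Gibbs $u$-states (Proposition~\ref{uc}(2)), accumulate on some $\nu_\infty$ with $\int\log\|Df|_{E^c_i}\|\,d\nu_\infty=0$ by continuity of this functional; but each terminal state lies in $G_{i-1}^{erg}$, so $\nu_\infty$ is an accumulation point of $G_{i-1}^{erg}$, and the positivity property at level $i-1$ forces $\lambda^c_i>0$ $\nu_\infty$-a.e., hence $\int\log\|Df|_{E^c_i}\|\,d\nu_\infty>0$, a contradiction. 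This yields the uniform lower bound, and Proposition~\ref{ugu} then gives closedness at level $i$.

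With closedness in hand I would deduce finiteness and positivity together from the absolute continuity of Pesin stable laminations underlying Proposition~\ref{pes}. First, the terminal states at level $i$ are finite in number: given infinitely many distinct ones, their weak-$*$ limit is a Gibbs $F_i$-state by closedness, its ergodic components are Gibbs $F_i$-states (Proposition~\ref{cue}) and physical, and the standard stable-holonomy/absolute-continuity argument shows that two distinct ergodic physical measures with absolutely continuous unstable conditionals cannot be weak-$*$ close. Being finite, and being unreachable as limits of non-terminal states (by continuity of $\nu\mapsto\lambda^c_{i+1}(\nu)$, a limit of states with $\lambda^c_{i+1}>0$ cannot have $\lambda^c_{i+1}<0$), each terminal state is isolated in $G_i^{erg}$. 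Hence no ergodic component of a genuine accumulation point is terminal, so with reduction (i) one gets $\lambda^c_{i+1}>0$ a.e., closing the induction. The final assertion is precisely the case $i=k$: every ergodic Gibbs $F_k$-state is terminal (its only negative exponents lie along $E^s$), so finiteness of the terminal set at level $k$ \emph{is} the finiteness of ergodic Gibbs $F_k$-states.

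The main obstacle is the control of the terminal measures, in two respects. The first is preventing the last positive central exponent from degenerating to zero in a limit: this is where hyperbolicity of Gibbs $u$-states is used essentially, transmitted through the inductive positivity at the previous level, and it is what makes Proposition~\ref{ugu} applicable. The second, and harder, is the finiteness and isolation of the terminal physical measures, which rests on the absolute continuity of the stable holonomies and on a careful treatment of non-ergodic accumulation points, showing that a weak-$*$ limit of pairwise distinct physical measures cannot retain a positive-weight ergodic physical component. I expect the bookkeeping for non-ergodic limits, together with the uniform geometric estimates (sizes of local unstable manifolds, bounded distortion, and density bounds for the unstable conditionals) that make the isolation step work, to be the most delicate part of the argument.
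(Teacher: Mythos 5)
Your proposal follows essentially the same route as the paper: an induction over the levels, closedness of the Gibbs $F_i$-states obtained from Proposition \ref{ugu} once the central exponents along the last one-dimensional summand of $F_i$ are uniformly bounded below, that bound obtained by contradiction from the positivity statement at the previous level together with hyperbolicity of Gibbs $u$-states, and the finiteness at level $k$ forced by the uniform contraction of $E^s$. Your ``terminal state'' analysis is precisely the paper's Proposition \ref{sec} (the dichotomy for limits of ergodic Gibbs $E$-states, proved via Pesin blocks and absolute continuity of the stable lamination), and your remark that a weak-$*$ limit of pairwise distinct ergodic Gibbs states cannot retain a positive-weight physical ergodic component is exactly its content.

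One inference should be repaired. You write that, the terminal states being finitely many and isolated in $G_i^{erg}$, ``no ergodic component of a genuine accumulation point is terminal.'' Isolation of a measure $\nu$ inside $G_i^{erg}$ only says that $\nu$ is not itself a weak-$*$ limit of other elements of $G_i^{erg}$; it does not prevent $\nu$ from occurring as a positive-weight ergodic component of a non-ergodic limit $\mu$ of pairwise distinct elements of $G_i^{erg}$, since $\mu\neq\nu$ and convergence to $\mu$ carries no information about proximity to $\nu$. The correct deduction is the one you yourself state in the closing paragraph, applied directly to $\mu$: if $\mu$ had a terminal (hence, by Proposition \ref{pes}, physical) ergodic component $\nu$ of positive weight, then the Pesin-block construction, the absolutely continuous unstable conditionals of $\nu$, and the absolute continuity of the stable holonomy force ${\rm Leb}_{\gamma}(\B(\nu,f))>0$ for every disk $\gamma$ in a fixed family $\mathcal{S}^u$ of local unstable manifolds with $\mu(S^u)>0$; the approximating ergodic Gibbs $F_i$-states $\mu_n$ eventually give positive mass to $S^u$, so ${\rm Leb}_{\gamma_n}$-a.e.\ point of some $\gamma_n$ lies in $\B(\mu_n,f)$, whence $\mu_n=\nu$ for all large $n$, contradicting distinctness. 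This is exactly the paper's Proposition \ref{sec}, so the defect is one of wiring rather than substance; with that substitution your induction closes as in the paper's Theorem \ref{C}, and the derivation of Theorem \ref{34} from it is identical to the paper's.
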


We start with the following observation on Gibbs $E$-states for general diffeomorphisms with dominated splitting $TM=E\oplus_{\succ} F$.

\begin{proposition}\label{sec}
Let $f$ be a diffeomorphism with dominated splitting $TM=E\oplus_{\succ} F$. Then, we have 
\begin{itemize}
\item either there exist at most finitely many ergodic Gibbs $E$-states,
\item or, if $\mu$ is a limit measure of the set of ergodic Gibbs $E$-states, which is a Gibbs $E$-state, then for $\mu$-a.e. $x\in M$, there exist non-negative Lyapunov exponents along $F$.
\end{itemize}
\end{proposition}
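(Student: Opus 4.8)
The plan is to read the dichotomy as a statement about accumulation: if the set of ergodic Gibbs $E$-states has no weak$^*$-accumulation point it is finite and the first alternative holds, so I would assume it is infinite and fix pairwise distinct ergodic Gibbs $E$-states $\mu_n$ converging weak$^*$ to a limit $\mu$ which, by the hypothesis of the second alternative, is itself a Gibbs $E$-state. The goal then becomes showing that the set $\{x : \text{every Lyapunov exponent along } F \text{ is negative}\}$ has zero $\mu$-measure.

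I would argue by contradiction, assuming this set has positive $\mu$-measure. Disintegrating $\mu$ into ergodic components and applying Proposition \ref{cue}, I obtain an ergodic component $\nu$ which is again a Gibbs $E$-state and whose Lyapunov exponents along $F$ are all negative. Since the exponents of a Gibbs $E$-state along $E$ are positive, $\nu$ is hyperbolic, and Proposition \ref{pes} then shows that $\nu$ is a physical measure; in particular its basin $\B(\nu,f)$ carries positive Lebesgue measure.

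The core of the argument is to show that such a $\nu$ cannot coexist with the accumulation just set up, i.e.\ that a hyperbolic physical Gibbs $E$-state is isolated inside the set of ergodic Gibbs $E$-states. Here I would invoke Pesin theory at a density point of a positive-$\nu$-measure Pesin block: the unstable manifolds tangent to $E$ and the stable manifolds tangent to $F$ have uniformly large size there, giving a local product neighborhood. Because $\nu$ is a physical Gibbs $E$-state, its basin has full conditional measure on Lebesgue-almost every $E$-unstable plaque through the block, and the absolute continuity of the Pesin stable lamination spreads this to full Lebesgue measure on Lebesgue-almost every $E$-unstable plaque crossing the neighborhood. Any ergodic Gibbs $E$-state $\mu'$ that is weak$^*$-close to $\nu$ concentrates most of its mass in this neighborhood, and since a Gibbs $E$-state has absolutely continuous conditionals along its $E$-unstable plaques, $\mu'$ must give positive mass to $\B(\nu,f)$; as this basin is forward invariant and $\mu'$ is ergodic, $\mu'(\B(\nu,f))=1$, forcing $\mu'=\nu$. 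Thus $\nu$ admits a weak$^*$-neighborhood meeting the ergodic Gibbs $E$-states only in $\nu$ itself.

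It remains to convert this isolation into the contradiction, and I expect this to be the delicate point rather than a formality. Isolation immediately rules out $\nu$ being a weak$^*$-limit of distinct ergodic Gibbs $E$-states, but in general $\nu$ appears only as an ergodic component of the limit $\mu$, and weak$^*$-convergence does not control ergodic decompositions. To close the gap I would upgrade isolation to a genuine finiteness statement for hyperbolic physical Gibbs $E$-states, aiming to show that domination provides a lower bound on the size of the relevant $E$-unstable manifolds and hence a uniform lower bound on the Lebesgue measure of the pairwise disjoint basins; summability then yields finitely many such $\nu=\nu_1,\dots,\nu_m$, after which the offending component can be excluded by tracking the forward-invariant generic sets $\{x:\frac1n\sum_{i=0}^{n-1}\delta_{f^i(x)}\to\nu_j\}$, each of which has $\mu_n$-measure zero once $\mu_n\neq\nu_j$. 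The main obstacle is precisely this quantitative Pesin input — the uniform size of unstable manifolds and the resulting uniform basin bound — together with the bookkeeping needed to pass from an ergodic component of the limit back to the accumulating sequence.
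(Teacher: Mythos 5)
Your setup (pass to a subsequence of distinct ergodic Gibbs $E$-states $\mu_n\to\mu$, extract an ergodic component $\nu$ of $\mu$ that is a Gibbs $E$-state with all $F$-exponents negative via Propositions \ref{cue} and \ref{pes}, then work on a positive-measure Pesin block of $\nu$ with the local product structure and absolute continuity of the stable lamination) matches the paper up to the point you yourself flag as delicate. The problem is precisely the bridge you propose there. Your plan is to upgrade the weak$^*$-isolation of $\nu$ to a \emph{finiteness} statement for such $\nu$ by extracting a uniform lower bound on the Lebesgue measure of the basins from ``domination provides a lower bound on the size of the relevant $E$-unstable manifolds.'' That quantitative input is not available: domination gives no uniform size for the Pesin unstable manifolds tangent to $E$ at this level of generality, and the stable manifolds tangent to $F$ certainly have no uniform size, since the contraction along $F$ is only non-uniform (Pesin-theoretic). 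This is why the paper's proof only concludes that there are \emph{at most countably many} such components (disjoint basins of positive Lebesgue measure), never finitely many. Your closing step via ``generic sets of $\mu_n$-measure zero'' also does not visibly produce a contradiction even if finiteness were granted.

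The missing idea is that you do not need $\mu_n$ to be weak$^*$-close to $\nu$ at all, so neither isolation nor finiteness is required. Write $\mu=b\nu+(1-b)\eta$ with $b\in(0,1]$, let $S^u$ be the saturation of a positive-$\nu$-measure piece $X_k\cap B(p,r)$ of the Pesin block by the local unstable plaques $\mathcal{S}^u=\{\F^u_{\delta_k}(x)\}$, and note $\mu(S^u)\ge b\,\nu(S^u)>0$. After shrinking the plaques so that $\mu(\partial S^u)=0$, weak$^*$ convergence gives $\mu_n(S^u)>0$ for all large $n$. The absolute continuity of the conditionals of the Gibbs $E$-state $\mu_n$ along the plaques of $\mathcal{S}^u$ then yields a plaque $\gamma_n$ on which $\B(\mu_n,f)$ has full (in particular positive) Lebesgue measure, while the local product structure around $\nu$ shows that \emph{every} plaque of $\mathcal{S}^u$ meets $\B(\nu,f)$ in a set of positive Lebesgue measure. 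Hence $\B(\mu_n,f)\cap\B(\nu,f)\neq\emptyset$, forcing $\mu_n=\nu$ for all large $n$ and contradicting that the $\mu_n$ are pairwise distinct. Without this replacement your argument has a genuine gap at the step converting isolation into a contradiction.
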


\begin{proof}
Suppose that there are infinitely many ergodic Gibbs $E$-states.
Let $\{\mu_n\}_{n\in \NN}$ be a sequence of different ergodic Gibbs $E$-states, which converges to $\mu$ as $n\to +\infty$, and $\mu$ is a Gibbs $E$-state. 
%It follows from Proposition \ref{cue} that any ergodic component of $\mu$ is also a Gibbs $E$-state.

Arguing by contradiction, assume $A\subset M$ satisfying $\mu(A)>0$ and
$$
\lim_{n\to +\infty}\frac{1}{n}\log \|Df^n|_{F(x)}\|<0,\quad \forall~ x\in A.
$$ 
This implies that there exist ergodic components of $\mu$ whose Lyapunov exponents along $F$ are all negative. Note that these ergodic measures are Gibbs $E$-states by applying Proposition \ref{cue}. By Proposition \ref{pes}, we know that they are also physical measures, so there are at most countably many such measures. Consequently, one can find an ergodic component $\nu$ of $\mu$ such that all its Lyapunov exponents along $F$ are negative, and there exists $b\in (0,1]$ such that $\mu$ can be rewritten as
$$
\mu=b \nu+(1-b)\eta
$$
for some Gibbs $E$-state $\eta$. 

\medskip
By Pesin theory \cite[$\S2$ \& $\S4$ ]{bp02}, consider $\{X_{k}\}_{k\ge 1}$ as the Pesin blocks related to $\nu$ with following properties:
\begin{itemize}
\item $X_{k}$ is compact for every $k\ge 1$, and $\nu(X_{k})\to 1$ as $k\to +\infty$;
\smallskip
\item there is $\delta_k>0$ such that every $x\in X_{k}$ admits a local Pesin stable manifold $\F^s_{\delta_k}(x)$ and a local Pesin unstable manifold $\F^u_{\delta_k}(x)$ with size $\delta_k$;
\smallskip
\item $\F^u_{\delta_k}(x)$ is tangent\footnote{This means that $T_y\F^u_{\delta_k}(x)=E(y)$ for every $y\in \F^u_{\delta_k}(x)$.} to $E$ and $\F^s_{\delta_k}(x)$ is tangent to $F$;
\smallskip
\item both $\F^u_{\delta_k}(x)$ and $\F^s_{\delta_k}(x)$ depend continuously on $x\in X_{k}$.
\end{itemize}
Choosing $k\in \NN$ such that
$
\nu(X_{k})>0.
$
Fixing $r\ll \delta_k$. By compactness of $X_{k}$, one can take $p\in X_{k}$ such that 
\begin{equation}\label{ps}
\nu(X_{k}\cap B(p,r))>0.
\end{equation}
Let
$$
S^u=\bigcup_{x\in X_{k}\cap B(p,r)}\F_{\delta_k}^u(x).
$$
We have $\nu(S^u)>0$ by (\ref{ps}). According to the properties of $\{\F_{\delta_k}^u(x)\}_{x\in X_k}$, one knows that 
$$
\mathcal{S}^u=\{\F_{\delta_k}^u(x): x\in X_{k}\cap B(p,r)\}
$$
is a measurable partition of the set $S^u$ w.r.t. $\nu$.
%Since $r$ is sufficiently small, we conclude that for every $x\in X_{k}\cap B(p,r)$, $\F^s_{\delta_k}(x)$ intersects every local Pesin unstable manifold from $\mathcal{S}^u$ transversely.

\begin{claim}
${\rm Leb}_{\gamma}(B(\nu,f))>0$ for every $\gamma\in \mathcal{S}^u.$
\end{claim}
\begin{proof}[Proof of the Claim]
%For the rest, we will show that 
%\begin{equation}
%{\rm Leb}_{\gamma}(B(\nu))>0, \quad \textrm{for every} ~\gamma\in \mathcal{S}^u.
%\end{equation}
Note that the basin $\B(\nu,f)$ has full $\nu$-measure as $\nu$ is ergodic, and $X_{k}\cap B(p,r)\subset S^u$ by construction, we get
$$
\nu(B(p,r)\cap X_{k}\cap \B(\nu,f)\cap S^u)>0
$$
Since $\nu$ is an ergodic Gibbs $E$-state, the conditional measures of $\nu|_{S^u}$ along unstable disks of $\mathcal{S}^u$ are absolutely continuous w.r.t. Lebesgue measures on these disks, which ensures that one can take $\gamma_{\nu}\in \mathcal{S}^u$ for which there exists a subset $B$ of $B(p,r)\cap\gamma_{\nu}\cap X_{k}\cap \B(\nu,f)$ such that 
\begin{equation}\label{n}
{\rm Leb}_{\gamma_{\nu}}(B)>0.
\end{equation}
Since $r$ is sufficiently small, we conclude that for every $x\in X_{k}\cap B(p,r)$, $\F^s_{\delta_k}(x)$ intersects every local Pesin unstable manifold from $\mathcal{S}^u$ transversely. By applying the absolute continuity of Pesin stable lamination, (\ref{n}) ensures
$$
{\rm Leb}_{\gamma}\left(\bigcup_{z\in B}\F_{\delta_k}^s(z)\cap \gamma\right)>0, \quad \textrm{for every}~\gamma \in  \mathcal{S}^u.
$$
Together with the observation that $\F_{\delta_k}^s(z)\subset \B(\nu,f)$ for every $z\in B$, this yields the desired result. 
%Consequently, we have
%$$
%{\rm Leb}_{\gamma}(B(\nu,f))>0, \quad \textrm{ for every} ~\gamma\in \mathcal{S}^u.
%$$
\end{proof}
Recall that we have $\nu(S^u)>0$, so $\mu(S^u)\ge b\nu(S^u)>0$. Up to reducing the sizes of unstable disks of $\mathcal{S}^u$ slightly, we may assume 
$
\mu(\partial(S^u))=0,
$
which implies that 
$$
\lim_{n\to +\infty}\mu_n(S^u)=\mu(S^u).
$$
In particular, there exists $n_0\in \NN$ such that $\mu_n(S^u)>0$ for every $n\ge n_0$.
For every $n\ge n_0$, since $\mu_n$ is an ergodic Gibbs $E$-state, one can find $\gamma_n\in \mathcal{S}^u$ such that ${\rm Leb}_{\gamma_n}$-almost every point of $\gamma_n$ belongs to $\B(\mu_n,f)$.
This together with the Claim above yields that $\mu_n=\nu$ for every $n\ge n_0$, which is contrary to our assumption.

%Recall that all the points of $\Lambda_k$ admit local Pesin stable manifolds of size larger than $1/k$, so does $B$.

\end{proof}

%For every $0\le i \le k$, let $G_{i}$ be the set of Gibbs $F_i$-states of $f$. Note that here $G_0$ is just the set of Gibbs $u$-states. Let $G_i^{erg}$ be the set of ergodic Gibbs $F_i$-states for every $0\le i \le k$, respectively.
%Denote by $\mathcal{L}_i$ the set of invariant measures $\mu$ such that there exists a sequence of different ergodic Gibbs $F_i$-states $\{\mu_n\}_{n\in \NN}$ such that 
%$$
%\mu_n\to \mu, \quad n\to +\infty.
%$$
We recall the following folklore result \cite[Proposition 4.9]{CMY22}.
\begin{lemma}\label{sim}
Under the setting as above, we have $G_k\subset G_{k-1}\subset \cdots \subset G_0$.
\end{lemma}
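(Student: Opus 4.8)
The plan is to prove the chain one link at a time: for each $1\le i\le k$ I will show $G_i\subset G_{i-1}$, i.e.\ every Gibbs $F_i$-state $\mu$ is also a Gibbs $F_{i-1}$-state, and then iterate from $i=k$ down to $i=1$. Recall that $F_i=F_{i-1}\oplus_{\succ}E_i^c$ is a dominated splitting with $F_{i-1}\subset F_i$. Since $\mu\in G_i$ has all Lyapunov exponents along $F_i$ positive, in particular its exponents along the sub-bundle $F_{i-1}$ are positive, so the first (exponent) requirement in the definition of a Gibbs $F_{i-1}$-state is automatic. The entire content is therefore to transfer the absolute continuity of conditional measures from the Pesin unstable manifolds tangent to $F_i$ to those tangent to $F_{i-1}$.

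First I would set up the two nested laminations. Positivity of the exponents along $F_i$ produces, on a Pesin block $X$ of positive $\mu$-measure, a measurable family of local unstable manifolds $W^{F_i}(x)$ tangent to $F_i$. Because $F_{i-1}$ dominates $E_i^c$ and both carry positive exponents, the restricted dynamics $f|_{W^{F_i}(x)}$ is (uniformly, on the block) non-uniformly hyperbolic with $F_{i-1}$ as its strongly expanding direction; hence inside each leaf $W^{F_i}(x)$ there is a strong-unstable sub-lamination $\{W^{F_{i-1}}(y):y\in W^{F_i}(x)\}$ tangent to $F_{i-1}$, with one-dimensional transverse direction $E_i^c$. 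This yields two measurable partitions on $X$: the partition $\xi_i$ into $W^{F_i}$-plaques and its refinement $\xi_{i-1}$ into $W^{F_{i-1}}$-plaques.

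The key technical step, and the one I expect to be the \emph{main obstacle}, is the absolute continuity of this sub-lamination: the holonomy maps of $\{W^{F_{i-1}}\}$ between two local $E_i^c$-transversals inside a single $W^{F_i}$-leaf are absolutely continuous with controlled Jacobians; equivalently, disintegrating ${\rm Leb}_{W^{F_i}(x)}$ along $\xi_{i-1}$ yields conditionals equivalent to ${\rm Leb}_{W^{F_{i-1}}(y)}$. I would obtain this by running the standard Pesin absolute-continuity argument (graph transform together with bounded distortion along the backward-fastest-contracting direction) for the restricted system $f|_{W^{F_i}}$, where the uniform rate gap that drives the estimate is exactly the domination $F_{i-1}\oplus_{\succ}E_i^c$ read on the Pesin block. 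This is the only place where the geometry of the invariant manifolds must genuinely be controlled; everything else is measure theory.

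Granting absolute continuity, the conclusion follows from transitivity of Rokhlin disintegration. Since $\xi_{i-1}$ refines $\xi_i$, for $\mu$-a.e.\ $y$ the conditional $\mu_y^{\xi_{i-1}}$ equals the disintegration along $\xi_{i-1}$ of $\mu_x^{\xi_i}$, where $x$ labels the $W^{F_i}$-leaf through $y$. Now $\mu\in G_i$ gives $\mu_x^{\xi_i}\ll {\rm Leb}_{W^{F_i}(x)}$, and a measure absolutely continuous with respect to another has, leaf by leaf, conditionals absolutely continuous with respect to the corresponding conditionals of the dominating measure; combined with the previous step this gives $\mu_y^{\xi_{i-1}}\ll {\rm Leb}_{W^{F_{i-1}}(y)}$. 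Hence $\mu\in G_{i-1}$, and iterating the inclusion $G_i\subset G_{i-1}$ from $i=k$ down to $i=1$ produces the desired chain $G_k\subset G_{k-1}\subset\cdots\subset G_0$.
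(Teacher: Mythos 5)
Your outline is sound, but note that the paper does not prove this lemma at all: it is stated as a ``folklore result'' and attributed to \cite[Proposition 4.9]{CMY22} (see also \cite{cv}), so there is no in-paper argument to compare against. What you have written is precisely the standard argument behind that citation --- the same scheme used to show that every SRB measure is a Gibbs $u$-state: the exponent condition along $F_{i-1}$ is inherited for free from positivity along $F_i\supset F_{i-1}$, and the whole content is the transfer of absolute continuity, which you correctly reduce to (a) the existence of the strong sub-lamination by $W^{F_{i-1}}$-leaves inside each Pesin unstable leaf $W^{F_i}(x)$ (guaranteed by the domination $F_{i-1}\oplus_{\succ}E_i^c$ restricted to the leaf), (b) the absolute continuity of that sub-lamination with respect to leafwise Lebesgue measure, and (c) transitivity of Rokhlin disintegration for the refining partition $\xi_{i-1}\prec\xi_i$ together with the fact that $\mu\ll\nu$ implies $\mu^{\xi}_x\ll\nu^{\xi}_x$ almost everywhere. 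Step (b) is the only genuinely analytic ingredient and you have correctly isolated it; a fully rigorous treatment would require carrying out the graph-transform and bounded-distortion estimates on a Pesin block (or citing the absolute continuity theorem for the strong unstable lamination inside unstable leaves), but as a proof strategy for this lemma your route is the right one and is, as far as one can tell, the same route taken in the cited reference.
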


Given an invariant measure $\mu$, denote by $\mathcal{C}(\mu)$ the family of ergodic components of $\mu$.
With above preparations, we can give the next result, which asserts a closed property on Gibbs $F_i$-states.

%\begin{theorem}\label{C}
%Let $f\in \P_k(M)$. Then for every $0\le i \le k$ we have 
%\begin{enumerate}
%\item $\mathcal{L}_i\subset G_i$;
%\item if $\mathcal{L}_i\neq \emptyset$, then for any $\mu \in \mathcal{L}_i$, there is $\alpha>0$ such that 
%$$
%\lim_{n\to +\infty}\frac{1}{n}\log \|Df^n|_{E^c_i(x)}\|>\alpha, \quad \mu \textrm{-a.e.}~x.
%$$
%\end{enumerate}
%\end{theorem}

\begin{theorem}\label{C}
If $f\in \P_k(M)$, then for every $1\le i \le k$, either $\# G_i^{erg}<+\infty$ or any limit measure of $G_i^{erg}$ is contained in $G_i$, whose Lyapunov exponents along $E_i^c$ are uniformly positive.
\end{theorem}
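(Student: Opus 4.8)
The plan is to prove, by induction on $i$, the slightly stronger uniform statement: if $\#G_i^{erg}=+\infty$ then there is a constant $c_i>0$ with $\lambda^c_i(\nu)\ge c_i$ for every $\nu\in G_i^{erg}$; the two conclusions of the theorem will then drop out. First I would record the bookkeeping: by Lemma \ref{sim} an ergodic Gibbs $F_i$-state is an ergodic Gibbs $F_j$-state for $j\le i$, so $G_i^{erg}\subset G_j^{erg}$ and $\#G_i^{erg}=+\infty$ forces $\#G_j^{erg}=+\infty$ for all $j\le i$. The inductive hypothesis then furnishes $c_0,\dots,c_{i-1}>0$ bounding below the exponents along $E^u,E^c_1,\dots,E^c_{i-1}$ uniformly over $G_i^{erg}$ (for $E^u$ this is just uniform expansion), so the exponents along $F_{i-1}$ are uniformly positive on $G_i^{erg}$.

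For the inductive step I would argue by contradiction, assuming $\inf\{\lambda^c_i(\nu):\nu\in G_i^{erg}\}=0$ and choosing $\nu_n\in G_i^{erg}$ with $\lambda^c_i(\nu_n)\to0^+$ and $\nu_n\to\mu$ weak$^*$ (after passing to a subsequence). Since the $\nu_n$ are ergodic Gibbs $F_{i-1}$-states with uniformly positive exponents along $F_{i-1}$, Proposition \ref{ugu} applied to $E=F_{i-1}$ gives that $\mu$ is a Gibbs $F_{i-1}$-state; this is the step that later lets me invoke Proposition \ref{sec}. Continuity of $\nu\mapsto\lambda^c_i(\nu)=\int\log\|Df|_{E^c_i}\|\,d\nu$ forces $\lambda^c_i(\mu)=0$, i.e. $\lambda^c_i$ averages to zero over the ergodic components $\mathcal{C}(\mu)$. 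These components are Gibbs $u$-states (Lemma \ref{sim} and Proposition \ref{uc}), hence hyperbolic, so $\lambda^c_i\neq0$ on each of them; as nonzero numbers averaging to zero, the components with $\lambda^c_i<0$ must carry positive measure.

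The decisive use of the domination comes next: on a component with $\lambda^c_i<0$, the ordering of exponents forced by $E^c_i\oplus_{\succ}\cdots\oplus_{\succ} E^s$ gives $\lambda^c_{i+1},\dots,\lambda^c_k<\lambda^c_i<0$ while $E^s$ is uniformly contracted, so \emph{every} exponent along $F:=E^c_i\oplus\cdots\oplus E^s$ is negative there. Thus $\mu\in G_{i-1}$ is a limit of ergodic Gibbs $F_{i-1}$-states on a positive-measure part of which the whole bundle $F$ is contracted, which contradicts the second alternative of Proposition \ref{sec} for the splitting $TM=F_{i-1}\oplus_{\succ} F$; its first alternative would then force $\#G_{i-1}^{erg}<+\infty$, against our standing assumption. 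This yields $c_i:=\inf_{G_i^{erg}}\lambda^c_i>0$. To finish, I would note that $E^u,E^c_1,\dots,E^c_i$ now all have uniformly positive exponents over $G_i^{erg}$, so Proposition \ref{ugu} with $E=F_i$ places every limit measure of $G_i^{erg}$ in $G_i$, and its ergodic components lie in $G_i^{erg}$ by Proposition \ref{cue}, whence $\lambda^c_i\ge c_i$ almost everywhere. I expect the main obstacle to be the passage through the non-ergodic limit $\mu$: one cannot read off its hyperbolicity directly, and the crux is to combine the hyperbolicity of the Gibbs $u$-state $\mu$ with the domination ordering to manufacture a positive-measure set on which the \emph{entire} complementary bundle $F$ is contracting, which is precisely the configuration Proposition \ref{sec} can refute.
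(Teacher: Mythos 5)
Your proof is correct and follows essentially the same route as the paper's: induction on $i$, with Proposition \ref{sec} plus the hyperbolicity of Gibbs $u$-states ruling out a vanishing exponent along $E_i^c$ in the limit, and Proposition \ref{ugu} upgrading the limit measure to a Gibbs $F_i$-state. The only variation is that you build the uniform bound $\inf_{\nu\in G_i^{erg}}\lambda^c_i(\nu)>0$ (which the paper extracts only afterwards, as a corollary) into the induction hypothesis and apply Proposition \ref{sec} to the limit $\mu$ itself, via an ergodic component on which domination forces the entire complementary bundle to contract, whereas the paper verifies uniform positivity only over the ergodic components $\mathcal{C}(\mu)$ of each limit measure and applies Proposition \ref{sec} to the limit of a sequence of those components.
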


\begin{proof}
We prove the result by induction on $1\le i \le k$. 
We start by proving the case $i=1$. Assume that $\# G_1^{erg}=+\infty$ and let $\mu$ be a limit measure of $G_1^{erg}$.  
That is,
%Write $F_1=E^u\oplus E_1^c$ for simplicity. By Theorem \ref{ugu}, it suffices to show that 
%\begin{equation}\label{uni}
%\inf\left\{\int \log \|Df|_{E_1^c}\| {\rm d}\nu,\quad \nu\in \mathcal{C}(\mu)\right\}>0.
%\end{equation}
%Let $\mu$ be an ergodic limit of Gibbs $F_1$-states. In other words, 
there is a sequence of ergodic Gibbs $F_1$-states $\{\mu_n\}_{n\in \NN}$ such that 
$$
\mu_n\xrightarrow{weak^*}\mu, \quad \textrm{as}~n\to +\infty.
$$
We have $\{\mu_n\}_{n\in \NN}\subset G_0$ as $G_1\subset G_0$ by Lemma \ref{sim}. 
From item (\ref{22}) of Proposition \ref{uc}, we know $\mu\in G_0$.
By Proposition \ref{sec} and the hyperbolicity assumption on Gibbs $u$-states, we get
$$
\lambda^c_1(x)>0,\quad \mu \textrm{-a.e.}~x.
$$
%$\mu$ is an ergodic limit of Gibbs $u$-states. By Corollary \ref{gue}, for $\mu$-a.e. $x\in M$, there exists non-negative Lyapunov exponent along $E_1^c\oplus_{\succ}\cdots\oplus_{\succ} E^s$. Observe that $\mu$ is a Gibbs $u$-state by item (\ref{22}) of Proposition \ref{uc}, therefore $\mu$ is hyperbolic by assumption. As a consequence, all the Lyapunov exponents of $\mu$ along $E_1^c$ is positive. 

To complete the proof of the case $i=1$, by Proposition \ref{ugu} it suffices to verify that the Lyapunov exponents of $\mu$ are uniformly positive along $F_i$. By contradiction, there exists a sequence of ergodic measures $\{\nu_n\}_{n\in \NN}$ in  $\cC(\mu)$ such that 
$$
\lim_{n\to +\infty}\lambda^c_1(\nu_n)=0.
$$
%
%$$
%\lim_{n\to +\infty} \int \log \|Df|_{E_1^c}\| d\nu_n=0.
%$$
Up to considering subsequences, we assume that $\nu_n$ converges to an $f$-invariant measure $\nu$ as $n\to +\infty$. Consequently, we have
\begin{equation}\label{gud}
\lambda^c_1(\nu)=\lim_{n\to +\infty} \lambda^c_1(\nu_n)=0.
\end{equation}
%\begin{itemize}
%\item $\nu_n$ converges to an $f$-invariant measure $\nu$,
%\item
%$$
%\lim_{n\to +\infty} \int \log \|Df|_{E_1^c}\| d\nu_n=\int \log \|Df|_{E_1^c}\| d\nu=0.
%$$
%\end{itemize}
Since we have shown $\mu\in G_0$, and $\{\nu_n\}_{n\in \NN}$ are ergodic components of $\mu$, the first item (\ref{11}) of Proposition \ref{uc} gives that $\nu_n\in G_0$ for every $n\in \NN$. So, $\nu$ is a limit measure of the set of ergodic Gibbs $u$-states. Using Proposition \ref{sec} again and the hyperbolicity of Gibbs $u$-states, we deduce that $\lambda^c_1(x)>0$ for $\nu \textrm{-a.e.}~ x$. It follows from 
Birkhoff's ergodic theorem that 
$$
\lambda^c_1(\nu)=\int \lambda^c_1(x) {\rm d}\nu>0,
$$
where we use the fact ${\rm dim}E_1^c=1$.
This gives the contradiction to (\ref{gud}). Thus, the case $i=1$ is verified.

\medskip
Inductively, we assume that the theorem holds for $i=\ell$($\ell<k$), and we then show that it is true for $i=\ell+1$. Assume that $\# G_{\ell+1}^{erg}=+\infty$. Let $\{\mu_n\}_{n\in \NN}$ be a sequence of ergodic measures in $G_{\ell+1}$, which converges to $\mu$ as $n\to +\infty$. In view of Proposition \ref{ugu}, we need only to show
\begin{equation}\label{uni2}
\inf_{\nu\in \cC(\mu)}\left\{\lambda^c_{\ell+1}(\nu)\right\}>0.
\end{equation}

As Lemma \ref{sim} tells us that $G_{\ell+1}\subset G_{\ell}\subset G_0$ and we assume that the theorem is true for $i=\ell$, we get $\mu\in G_{\ell}$. 
By Theorem \ref{sec}, together with the assumption of Gibbs $u$-states we know that $\mu$ admits only positive Lyapunov exponents along $E_{\ell+1}^c$.

Now we show (\ref{uni2}) is true. By contradiction we assume that there exists a sequence of ergodic measures $\{\nu_n\}_{n\in \NN}\subset \cC(\mu)$, which converges to an $f$-invariant measure $\nu$ and satisfies
\begin{equation}\label{et}
\lim_{n\to +\infty}\lambda^c_{\ell+1}(\nu_n)=\lambda^c_{\ell+1}(\nu)=0.
\end{equation}
Since $\mu\in G_{\ell}$, by Proposition \ref{cue} we know that $\{\nu_n\}_{n\in\NN}$ is a sequence of ergodic measures in $G_{\ell}\subset G_0$. 

As we have assumed that this theorem is true for $i=\ell$, the limit measure $\nu$ of $\{\nu_n\}_{n\in \NN}$ is also a Gibbs $F_{\ell}$-state. By Proposition \ref{sec}, we have
$$
\lambda^c_{\ell+1}(x)\ge 0, \quad \nu \textrm{-a.e.}~x.
$$
Noting that $\nu\in G_0$ guaranteed by item (\ref{22}) of Proposition \ref{uc}, the assumption on Gibbs $u$-states then gives
$$
\lambda^c_{\ell+1}(x)> 0, \quad \nu \textrm{-a.e.}~x.
$$
Therefore, using the Birkhoff's ergodic theorem again, we get
$$
\lambda^c_{\ell+1}(\nu)=\int \lambda^c_{\ell+1}(x) {\rm d}\nu>0,
$$
contradicting to the convergence (\ref{et}). 
Therefore, (\ref{uni2}) is true and we complete the proof of Theorem \ref{C}.

\end{proof}

We now ready to prove Theorem \ref{34}.

%\begin{theorem}\label{34}
%If $f\in \P_k(M)$, then for every $0\le i \le k-1$, if $\mu$ is a accumulation measure of $G_i^{erg}$, then for $\mu$-a.e. $x\in M$, one has that 
%$$
%\lambda^c_{i+1}(x)> 0, \quad \mu \textrm{-a.e.}~x.
%$$ 
%Moreover, we know that there are at most finitely many ergodic Gibbs $F_k$-states.
%\end{theorem}

\begin{proof}[Proof of Theorem \ref{34}]
For $0\le i \le k-1$, let $\mu$ be a limit measure of $G_i^{erg}$.
We know from Proposition \ref{uc} and Theorem \ref{C} that $\mu$ is a Gibbs $F_i$-state, which is also a Gibbs $u$-state by Lemma \ref{sim}. So, $\mu$ is hyperbolic by definition of $\P_k(M)$. This together with Proposition \ref{sec} ensures that for $\mu$-a.e. $x\in M$, there exist positive Lyapunov exponents along $F:=E_{i+1}^c\oplus_{\succ} \cdots \oplus_{\succ} E^s$. As $E_{i+1}^c$ has dimension one, $\lambda^c_{i+1}(x)$ is just the largest Lyapunov exponent along $F$.
Consequently, we obtain 
$$
\lambda^c_{i+1}(x)> 0, \quad \mu \textrm{-a.e.}~x.
$$ 

To show the result on Gibbs $F_k$-states, assume by contradiction that there are infinitely many ergodic Gibbs $F_k$-states. This implies that there exists a sequence of different ergodic Gibbs $F_k$-states, which converges to an invariant measure $\mu$. By Theorem \ref{C}, $\mu$ must be a Gibbs $F_k$-state as well. However, from Proposition \ref{sec} we get that $\mu$ have positive Lyapunov exponents along $E^s$, which is a contradiction. Thus, the proof is complete.
\end{proof}

\medskip
As a byproduct of Theorem \ref{C}, we also have the following interesting result, which asserts the uniformity on Lyapunov exponents of Gibbs $F_i$-states along $F_i$, $1\le i \le k$. 
\begin{corollary}
Let $f$ be a $C^2$ diffeomorphism as in Theorem \ref{C}, then there exists $\alpha>0$ such that for every Gibbs $F_i$-state $\mu$,
one has that 
$$
\lambda^c_i(x)>\alpha, \quad \mu \textrm{-a.e.}~x.
$$
\end{corollary}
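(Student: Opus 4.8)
The plan is to reduce the statement, for each fixed $i$, to a uniform positive lower bound on the central exponent over the \emph{ergodic} Gibbs $F_i$-states, and then to invoke the dichotomy of Theorem \ref{C}. Concretely, I would first establish, for every $1\le i\le k$,
$$
\beta_i:=\inf_{\nu\in G_i^{erg}}\lambda^c_i(\nu)>0,
$$
and then set $\alpha=\tfrac12\min_{1\le i\le k}\beta_i$. Given any Gibbs $F_i$-state $\mu$, Proposition \ref{cue} tells us that every $\nu\in\cC(\mu)$ is again a Gibbs $F_i$-state, so $\cC(\mu)\subset G_i^{erg}$; since $\dim E_i^c=1$, for $\mu$-a.e.\ $x$ the ergodic component $\nu_x$ through $x$ satisfies $\lambda^c_i(x)=\lambda^c_i(\nu_x)\ge\beta_i>\alpha$, which is exactly the desired conclusion.

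To prove $\beta_i>0$ for a fixed $i$, I would split according to the dichotomy in Theorem \ref{C}. Each $\nu\in G_i^{erg}$ is a Gibbs $F_i$-state, so $\lambda^c_i(\nu)>0$; hence if $\# G_i^{erg}<+\infty$ then $\beta_i$ is the minimum of finitely many positive numbers and is positive. Assume therefore that $\# G_i^{erg}=+\infty$ and, arguing by contradiction, that $\beta_i=0$. Then there is a sequence $\{\nu_n\}\subset G_i^{erg}$ with $\lambda^c_i(\nu_n)\to 0$; passing to a subsequence, $\nu_n$ converges weak$^*$ to an invariant measure $\mu$, which is a limit measure of $G_i^{erg}$. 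Because $f$ is $C^2$ and the splitting is dominated, the map $x\mapsto\log\|Df|_{E_i^c(x)}\|$ is continuous, so $\lambda^c_i(\mu)=\lim_n\lambda^c_i(\nu_n)=0$. On the other hand, since $\# G_i^{erg}=+\infty$, the second alternative of Theorem \ref{C} applies, so $\mu\in G_i$ and the Lyapunov exponents of $\mu$ along $E_i^c$ are uniformly positive. As $\dim E_i^c=1$, Birkhoff's ergodic theorem gives $\lambda^c_i(\mu)=\int\lambda^c_i(x)\,{\rm d}\mu>0$, contradicting $\lambda^c_i(\mu)=0$. Hence $\beta_i>0$.

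The genuine content is entirely supplied by Theorem \ref{C}; the remainder is bookkeeping, and the only two points needing care are (i) the continuity of $x\mapsto\log\|Df|_{E_i^c(x)}\|$, which makes the functional $\nu\mapsto\lambda^c_i(\nu)$ continuous in the weak$^*$ topology and thus forces the exponent of the limit measure to equal the limit of the exponents; and (ii) the identity $\lambda^c_i(\mu)=\int\lambda^c_i(x)\,{\rm d}\mu$, valid because $E_i^c$ is one-dimensional and $Df$-invariant, so that $\tfrac1n\log\|Df^n|_{E_i^c(x)}\|$ is a Birkhoff average of $\log\|Df|_{E_i^c}\|$. I do not expect any substantial obstacle beyond transferring the ``uniformly positive'' conclusion of Theorem \ref{C} into a strict, uniform bound, which is absorbed by the factor $\tfrac12$ and the minimum over the finitely many indices $i$.
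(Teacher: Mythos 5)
Your proposal is correct and follows essentially the same route as the paper: a contradiction argument in which a sequence of ergodic Gibbs $F_i$-states with central exponent tending to $0$ is passed to a weak$^*$ limit, the continuity of $x\mapsto\log\|Df|_{E_i^c(x)}\|$ forces the limit's exponent to be $0$, while Theorem \ref{C} plus Birkhoff's theorem (using $\dim E_i^c=1$) forces it to be positive. The only cosmetic differences are that you make explicit the finite branch of the dichotomy and the reduction to ergodic components via Proposition \ref{cue}, both of which the paper leaves implicit.
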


\begin{proof}
Assuming contrary, there exists $1\le i \le k$ and a sequence of Gibbs $F_i$-states $\{\mu_n\}_{n\in \NN}$ for which there exists an ergodic component $\nu_n$ of $\mu_n$ for every $n\in \NN$ so that
$$
\lim_{n\to +\infty}\lambda^c_i(\nu_n)=0.
$$
%a sequence of ergodic Gibbs $F_i$-states such that
%Given any $1\le i \le k$ and $\mu\in G_i$. and $\mu$ 
%
%a sequence of ergodic components $\nu_n$ of $\mu$ such that $\lambda_i(\nu_n)\to 0$ as $n\to +\infty$. 
Without loss of generality, we suppose that $\{\nu_n\}_{n\in \NN}$ converges to some $f$-invariant measure $\nu$ when $n\to +\infty$. Note that $x\mapsto \log \|Df|_{E_i^c(x)}\|$ is continuous. By the weak$^*$ convergence we have
\begin{equation}\label{eu}
\lambda^c_i(\nu)=\lim_{n\to +\infty}\lambda^c_i(\nu_n)=0.
\end{equation}
%$$
%\lambda_i(\nu)=\lim_{n\to +\infty}\lambda_i(\nu_n)=0
%$$
By Proposition \ref{cue}, we know each $\nu_n$ is an ergodic Gibbs $F_i$-state for every $n\in \NN$. Hence, $\nu$ is a limit measure of the set of ergodic Gibbs $F_i$-states. By Theorem \ref{C}, there exists $a>0$ such that 
$$
\lambda^c_i(x)>a,\quad \nu \textrm{-a.e.}~x.
$$
Noting ${\rm dim}E_i^c=1$, by applying Birkhoff's ergodic theorem (see e.g. \cite[Theorem 1.14]{wa82}) one obtains
$$
\lambda_i^c(\nu)=\int \lambda_i^c(x){\rm d}\nu>a,
$$
which is in contradiction with convergence (\ref{eu}).
\end{proof}

\section{Proof of Theorem \ref{TheoA}}

Now we begin to prove Theorem \ref{TheoA}. Assume that $f\in \P_k(M)$.
%is a $C^2$ diffeomorphism exhibiting the partially hyperbolic splitting 
%$$
%TM=E^u\oplus_{\succ} E_1^c\oplus_{\succ}\cdots\oplus_{\succ} E_k^c \oplus_{\succ} E^s
%$$
%such that 
%\begin{itemize}
%\item ${\rm dim}E_i^c=1, ~ 1\le i \le k, ~k\ge 1;$
%\smallskip
%\item any Gibbs $u$-state is hyperbolic for $f$.
%\end{itemize}
%
%\medskip
%Define $\{F_i\}_{0\le i \le k}$ as follows:
%\begin{itemize}
%\item[--] $F_0=E^u$;
%\item[--] $F_i=E^u\oplus_{\succ} \cdots \oplus_{\succ} E_i^c,$~$1\le i \le k$.
%\end{itemize}
For every $0\le i \le k$, put 
$$
\G_i=\left\{\mu: \mu\in G_i ~\textrm{is ergodic and}~\lambda^c_{i+1}(\mu)<0\right\}.
$$
%as before put
%$$
%F_i=E^u\oplus_{\succ} \cdots \oplus_{\succ} E_i^c;
%$$
%and define 
%$$
%\G_i=\left\{\mu: \mu\in G_i ~\textrm{is ergodic and}~\lambda^c_{i+1}(\mu)<0\right\}.
%$$
%\begin{itemize}
%\item[--]
%$\G_i=\left\{\mu: \mu\in G_i ~\textrm{is ergodic and}~\lambda^c_{i+1}(\mu)<0\right\}$, $ 0\le i \le k-1;$
%
%\medskip
%\item[--]
%$
%\G_k=\left\{\mu: \mu\in G^c_k ~\textrm{is ergodic}\right\}.
%$
%\end{itemize}
Recall that $G_i$ is the space of Gibbs $F_i$-states for every $0\le i \le k$, respectively; and 
$$
\lambda^c_{i+1}(\mu)=\int \log \|Df|_{E_{i+1}^c}\| {\rm d} \mu.
$$
Note that here we set $E_{k+1}^c=E^s$, and
$\lambda^c_{k+1}(\mu)$ denotes the maximal Lyapunov exponent of $\mu$ along $E^s$, which is negative automatically as the uniform contraction on $E^s$.
%\begin{proposition}\label{cug}
%Let $f$ be a diffeomorphism with dominated splitting $TM=E\oplus_{\succ} F$. Then for Lebesgue almost every $x\in M$, any limit measure $\mu$ of $\{\frac{1}{n}\sum_{i=0}^{n-1}\delta_{f^i(x)}\}_{n\in \NN}$ satisfies 
%$$
%h_{\mu}(f)\ge \int \log |{\rm det}Df|_{E}| {\rm d}\mu.
%$$
%\end{proposition}

\medskip
To give the proof of Theorem \ref{TheoA}, we need the following result.

\begin{theorem}\label{eb}
Under the setting of Theorem \ref{TheoA}, the set of ergodic physical(SRB) measures coincides with $\bigcup_{0\le i \le k}\G_i$.
\end{theorem}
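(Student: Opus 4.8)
The plan is to prove the set equality by establishing the inclusion of $\bigcup_{0\le i\le k}\G_i$ into the set of ergodic physical(SRB) measures, together with the reverse inclusion. Throughout, the engine is the domination-induced ordering of Lyapunov exponents: for an ergodic measure $\mu$ and a dominated splitting $E\oplus_{\succ} F$, the minimal Lyapunov exponent of $\mu$ along $E$ strictly exceeds the maximal one along $F$ (the factor $1/2$ in the definition of domination yields a gap of at least $\log 2$ per iterate). Applied to the flag $E^u\oplus_{\succ} E_1^c\oplus_{\succ}\cdots\oplus_{\succ} E_k^c\oplus_{\succ} E^s$ with each $\dim E_j^c=1$, this gives the strict chain $\lambda_1^c(\mu)>\lambda_2^c(\mu)>\cdots>\lambda_k^c(\mu)$, with all exponents along $E^s$ strictly below $\lambda_k^c(\mu)$.

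For the inclusion $\G_i\subseteq$ (physical(SRB)): I would fix $\mu\in\G_i$, so that $\mu$ is an ergodic Gibbs $F_i$-state with $\lambda_{i+1}^c(\mu)<0$. Writing $F:=E_{i+1}^c\oplus_{\succ}\cdots\oplus_{\succ} E_k^c\oplus_{\succ} E^s$, the exponent ordering forces every Lyapunov exponent of $\mu$ along $F$ to be at most $\lambda_{i+1}^c(\mu)<0$ (the bundles $E_j^c$ with $j>i$ by domination, and $E^s$ by uniform contraction). Thus $\mu$ is an ergodic Gibbs $F_i$-state all of whose exponents along the complementary bundle $F$ are negative, and Proposition \ref{pes} yields that $\mu$ is a physical measure. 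It is moreover SRB: its positive exponents lie exactly along $F_i$, so its Pesin unstable manifolds coincide with those tangent to $F_i$, along which the Gibbs $F_i$-state property already supplies absolutely continuous conditionals.

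For the reverse inclusion I would take an ergodic physical(SRB) measure $\mu$. Since every SRB measure is a Gibbs $u$-state and $f\in\P_k(M)$, $\mu$ is hyperbolic, hence all its central exponents are nonzero; by the chain above they change sign exactly once. Let $i$ be the largest index with $\lambda_i^c(\mu)>0$, setting $i=0$ if $\lambda_1^c(\mu)<0$. Then $\lambda_{i+1}^c(\mu)<0$ (automatic when $i=k$, since $E_{k+1}^c=E^s$), and the positive exponents of $\mu$ are precisely those along $F_i$. Consequently the Pesin unstable manifolds of $\mu$ are tangent to $F_i$, and the SRB property gives absolutely continuous conditionals along them, so $\mu$ is a Gibbs $F_i$-state. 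Combining $\mu\in G_i^{erg}$ with $\lambda_{i+1}^c(\mu)<0$ gives $\mu\in\G_i$.

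The hard part will be matching the abstract Pesin unstable manifold of $\mu$ with the $F_i$-unstable manifold, i.e. verifying that the sum of positive-exponent Oseledets subspaces is \emph{exactly} $F_i$. This is precisely where the one-dimensionality of the centers and the strictness of the domination gaps are indispensable: together with the hyperbolicity coming from $\P_k(M)$, they guarantee that no central exponent vanishes and that the sign change is clean, so the positive bundle cannot ``straddle'' a center. A secondary delicate point, should one wish to start from ``physical'' rather than ``SRB'', is upgrading the strong-unstable absolute continuity of a Gibbs $u$-state to full Pesin-unstable absolute continuity; this can be extracted from the entropy inequality of Proposition \ref{cug} together with the Ledrappier--Young entropy characterization of SRB measures.
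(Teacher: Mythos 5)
Your forward inclusion ($\G_i$ into the ergodic physical/SRB measures) and your handling of a measure already known to be SRB both match the paper: the domination-induced ordering of the central exponents, the one-dimensionality of each $E_j^c$, and the hyperbolicity of Gibbs $u$-states identify the positive Oseledets bundle with $F_i$ for exactly one index $i$, and Proposition \ref{pes} then gives physicality. The problem is the reverse inclusion for a measure that is only known to be \emph{physical}: this is the substantive half of the theorem (and the half that Theorem \ref{TheoA} actually needs, since it asserts finiteness of physical measures), and you defer it to a one-sentence closing remark instead of proving it. A physical measure is not a priori SRB, so your step ``the SRB property gives absolutely continuous conditionals along the Pesin unstable manifolds'' has nothing to stand on in that case, and the gap is not ``secondary''.

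The paper closes exactly this gap, using the tools you name but do not deploy. By item (\ref{33}) of Proposition \ref{uc} and Proposition \ref{cug} there is a full Lebesgue measure set $\D\subset M$ such that every limit of empirical measures of a point of $\D$ is a Gibbs $u$-state $\eta$ satisfying $h_{\eta}(f)\ge \int \log |{\rm det}Df|_{F_{i}}|\,{\rm d}\eta$ for all $i$. Since $\B(\mu,f)$ has positive Lebesgue measure it meets $\D$, so $\mu$ itself is a Gibbs $u$-state satisfying these inequalities; hyperbolicity of Gibbs $u$-states together with your exponent chain pins down the index $i$ with all exponents along $F_i$ positive and $\lambda^c_{i+1}(\mu)<0$; Ruelle's inequality gives the reverse entropy inequality for that $i$; and the Ledrappier--Young characterization \cite{ly} converts the resulting Pesin entropy formula into the SRB property, whence $\mu\in\G_i$. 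Your outline is recoverable, but as written the proof of the hard inclusion is missing: you should promote your final remark to the main argument and carry it out.
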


\begin{proof}
To begin with, we show the existence of ergodic physical measures.
From the result \cite[Theorem C]{CMY22}, there exists an ergodic SRB measure $\mu$ for $f$, which is a Gibbs $u$-state by definition. We know also that $\mu$ is hyperbolic by assumption. Consequently, $\mu$ is an ergodic physical measure by using the absolute continuity of Pesin stable lamination. 

By definition, for any $0\le i \le k$, any invariant measure in $\G_i$ is a hyperbolic ergodic SRB measure, which is also a physical measure by Proposition \ref{pes}. Thus, it suffices to show that every ergodic physical measure is contained in $\G_i$ for some $0\le i \le k$.

By item (\ref{33}) of Proposition \ref{uc} and Proposition \ref{cug}, there is a subset $\D$ of $M$ with full Lebesgue measure such that for any $x\in \D$, any limit measure $\eta$ of $\{\frac{1}{n}\sum_{i=0}^{n-1}\delta_{f^i(x)}\}_{n\in \NN}$ is a Gibbs $u$-state and satisfies
\begin{equation}\label{ee}
h_{\eta}(f)\ge \int \log |{\rm det}Df|_{F_{i}}| {\rm d} \eta,\quad \forall~ 1\le i \le k.
\end{equation}

Assume that $\mu$ is an ergodic physical measure, thus $\B(\mu,f)$ admits positive Lebesgue measure. Thus $\B(\mu,f)\cap \D$ exhibits positive Lebesgue measure. By taking $x\in \B(\mu,f)\cap \D$, one gets form definition of $\B(\mu,f)$ that
$$
\lim_{n\to +\infty}\frac{1}{n}\sum_{j=0}^{n-1}\delta_{f^j(x)}=\mu.
$$
So, $\mu$ is a Gibbs $u$-state and satisfies (\ref{ee}) form the definition of $\D$.

Since any Gibbs $u$-state is hyperbolic by assumption, $\mu$ is an ergodic hyperbolic Gibbs $u$-state. Consequently, one can take $i\in \{0,\cdots,k\}$ such that all the Lyapunov exponents of $\mu$ along $F_i$ are positive and $\lambda^c_{i+1}(\mu)<0$. By Ruelle's inequality, one has
$$
h_{\mu}(f)\le \int \log |{\rm det}Df|_{F_{i}}| {\rm d} \mu.
$$
On the other hand, inequality (\ref{ee}) says that
$$
h_{\mu}(f)\ge \int \log |{\rm det}Df|_{F_{i}}| {\rm d} \mu.
$$
Therefore, 
$$
h_{\mu}(f)= \int \log |{\rm det}Df|_{F_{i}}| {\rm d}\mu.
$$
By the result of Ledrappier-Young \cite[Theorem A]{ly}, $\mu$ is an SRB measure, which is contained in $\G_i$ by its sign of Lyapunov exponents.
\end{proof}

Now, we can give the proof of Theorem \ref{TheoA}.

\begin{proof}[Proof of Theorem \ref{TheoA}]

In view of Theorem \ref{eb}, it suffices to show that for each $0\le i \le k$, $\G_i$ is a finite set. We know from Theorem \ref{34} that $\G_k$ is finite. Now we assume 
$0\le i \le k-1$. 

By contradiction, there exists a sequence of measures $\{\mu_n\}_{n\in \NN}\subset \G_i$ and some invariant measure $\mu$ such that 
$$
\mu_n\xrightarrow{weak^*} \mu, \quad \textrm{as}~n\to +\infty.
$$ 
By Theorem \ref{34}, $\mu$ is a Gibbs $F_i$-state, which satisfies 
\begin{equation}\label{t}
\lambda^c_{i+1}(x)>0, \quad \mu \textrm{-a.e.}~ x\in M.
\end{equation}

%\begin{equation}\label{k}
%\lim_{n\to +\infty}\frac{1}{n}\log \|Df^n|_{E_{i+1}^c\oplus_{\succ} \cdots \oplus_{\succ} E^s}\|> 0,\quad \mu \textrm{-a.e.}~ x\in M.
%\end{equation}
%
%If $i=k$, as $E_{k+1}^c=E^s$ by convention, which is uniformly contracting. Thus, all the Lyapunov exponents along $E^s$ are negative, which is a contradiction and the proof is done. 
%
%If $0\le i\le k-1$, due to ${\rm dim}E_{i+1}^c=1$ and domination, one gets
On the other hand, since $\{\mu_n\}_{n\in \NN}\subset \G_i$, we know by definition that $\lambda^c_{i+1}(\mu_n)<0$ for every $n\in\NN$. The convergence $\mu_n\xrightarrow{weak^*} \mu$ yields
\begin{equation}\label{j}
\lambda^c_{i+1}(\mu)=\lim_{n\to +\infty}\lambda^c_{i+1}(\mu_n)\le 0.
\end{equation}
By Birkhoff's ergodic theorem, we also have
$$
\lambda^c_{i+1}(\mu)=\int \lambda^c_{i+1}(x) {\rm d}\mu.
$$
This together with (\ref{j}) yields that there exists a subset $A\subset M$ such that $\mu(A)>0$ and
$$
\lambda^c_{i+1}(x)<0, \quad \forall~ x\in A,
$$
which contradicts (\ref{t}) and we complete the proof.

\end{proof}

\end{document}